\newcolumntype{L}[1]{>{\raggedright\let\newline\\\arraybackslash\hspace{0pt}}m{#1}}
\newcolumntype{C}[1]{>{\centering\let\newline\\\arraybackslash\hspace{0pt}}m{#1}}
\newcolumntype{R}[1]{>{\raggedleft\let\newline\\\arraybackslash\hspace{0pt}}m{#1}}
\newtheorem{theorem}{Theorem}
\newtheorem{proposition}[theorem]{Proposition}
\theoremstyle{definition}
\newtheorem{example}[theorem]{Example}
\theoremstyle{lemma}
\newtheorem{lemma}[theorem]{Lemma}
\newtheorem{coroll}[theorem]{Corollary}
\theoremstyle{remark}
\newtheorem{remark}[theorem]{Remark}
\newtheorem{assumption}[theorem]{Assumption}
\Crefname{assumption}{Assumption}{Assumptions}
\numberwithin{theorem}{section}
\numberwithin{equation}{section}
\numberwithin{table}{section}
\numberwithin{figure}{section}
\definecolor{myBlue}{RGB}{30,144,255} 
\definecolor{myGreen}{RGB}{69,169,0} 
\definecolor{myRed}{RGB}{165,12,42} 
\definecolor{myOrange}{RGB}{225,92,22} 
\definecolor{color0}{rgb}{0.12156862745098,0.466666666666667,0.705882352941177}
\definecolor{color1}{rgb}{1,0.498039215686275,0.0549019607843137}
\definecolor{color2}{rgb}{0.172549019607843,0.627450980392157,0.172549019607843}
\definecolor{color3}{rgb}{0.83921568627451,0.152941176470588,0.156862745098039}
\definecolor{color4}{rgb}{0.580392156862745,0.403921568627451,0.741176470588235}
\definecolor{color5}{rgb}{0,0,0}
\newcommand{\delete}[1]{ }
\def\N{\mathbb{N}}
\def\R{\mathbb{R}}
\newcommand\ds{\,\text{d}s}
\newcommand\dx{\,\text{d}x}
\newcommand{\tddt}{\ensuremath{\tfrac{\text{d}}{\text{d}t}} }
\newcommand{\hook}{\ensuremath{\hookrightarrow}}
\DeclareMathOperator{\ddiv}{div}
\newcommand{\calA}{\ensuremath{\mathcal{A}} }
\newcommand{\calB}{\ensuremath{\mathcal{B}} }
\newcommand{\calC}{\ensuremath{\mathcal{C}} }
\newcommand{\calD}{\ensuremath{\mathcal{D}} }
\newcommand{\cHV}{\ensuremath{{\mathcal{H}_{\scalebox{.5}{\V}}}}}
\newcommand{\cHQ}{\ensuremath{{\mathcal{H}_{\scalebox{.5}{\Q}}}}}
\newcommand{\cHZ}{\ensuremath{{\mathcal{H}_{\scalebox{.5}{\cZ}}}}}
\newcommand{\calK}{\ensuremath{\mathcal{K}} }
\newcommand{\Q}{\ensuremath{\mathcal{Q}} }
\newcommand{\V}{\ensuremath{\mathcal{V}}}
\newcommand{\calX}{\ensuremath{\mathcal{X}} }
\newcommand{\cZ}{\ensuremath{\mathcal{Z}} }
\newcommand{\eps}{\ensuremath{\varepsilon}}
\newcommand{\dd}{\ensuremath{\hat d}}
\newcommand{\m}{\ensuremath{m}}
\newcommand{\proj}{R}
\newcommand{\projInd}[1]{\ensuremath{\proj}_{#1}}
\newcommand{\Ri}{\projInd{i}}
\newcommand{\Rj}{\projInd{j}}
\newcommand{\Ru}{\projInd{u}}
\newcommand{\Rp}{\projInd{p}}
\newcommand{\CQtoH}{\ensuremath{{C_{\scalebox{.5}{\Q \hook \cHQ}}}} }
\newcommand{\CQtoHsquare}{\ensuremath{{C^2_{\scalebox{.5}{\Q \hook \cHQ}}}} }
\newcommand{\mos}[2]{{#1}_{I_{#2}}}
\newenvironment{smallbmatrix}{\left[\begin{smallmatrix}}{\end{smallmatrix}\right]}
\newcommand{\gt}{\widetilde{g}}
\newcommand{\mosp}[1]{\mos{\bar{p}}{#1}}
\newcommand{\mosdp}[1]{\mos{\dot{\bar{p}}}{#1}}
\newcommand{\LinfHQ}{L^\infty(0,\tau;\cHQ\!)}
\newcommand{\LinfQd}{L^\infty(0,\tau;\Q^*\!)}
\newcommand{\Kalpha}{\kappa_{\Q}}
\newcommand{\Kbeta}{\kappa_{\cHQ\!}}
\begin{document}
\title[Semi-explicit Discretization Schemes for Elliptic-Parabolic Problems]{Semi-explicit Discretization Schemes for Weakly-Coupled Elliptic-Parabolic Problems}
\author[]{R.~Altmann$^\dagger$, R.~Maier$^\dagger$, B.~Unger$^{\ddagger}$}
\address{${}^{\dagger}$ Department of Mathematics, University of Augsburg, Universit\"atsstr.~14, 86159 Augsburg, Germany}
\email{\{robert.altmann,roland.maier\}@math.uni-augsburg.de}
\address{${}^{\ddagger}$ Institute of Mathematics MA\,{}4-5, Technical University Berlin, Stra\ss e des 17.~Juni 136, 10623 Berlin, Germany}
\email{unger@math.tu-berlin.de}
%
\date{\today}
\keywords{}
%
%
\begin{abstract}
We prove first-order convergence of the semi-explicit Euler scheme combined with a finite element discretization in space for elliptic-parabolic problems which are weakly coupled. This setting includes poroelasticity, thermoelasticity, as well as multiple-network models used in medical applications. The semi-explicit approach decouples the system such that each time step requires the solution of two small and well-structured linear systems rather than the solution of one large system. The decoupling improves the computational efficiency without decreasing the convergence rates. The presented convergence proof is based on an interpretation of the scheme as an implicit method applied to a constrained partial differential equation with delay term. Here, the delay time equals the used step size. This connection also allows a deeper understanding of the weak coupling condition, which we accomplish to quantify explicitly. 
\end{abstract}
%
%
\maketitle
%
{\tiny {\bf Key words.} elliptic-parabolic problem,  semi-explicit time discretization, delay, poroelasticity, multiple-network}\\
\indent
{\tiny {\bf AMS subject classifications.}  {\bf 65M12}, {\bf 65L80}, {\bf 65M60}, {\bf 76S05}} 
%
%
%
\section{Introduction}
We study the semi-explicit time discretization of a linear elliptic problem that is coupled to a linear parabolic equation, which we refer to as \emph{elliptic-parabolic problem}. The resulting model is a partial differential-algebraic equation (PDAE) that appears, for instance, in the field of geomechanics~\cite{Biot41, Zob10}. In particular, we consider the deformation of porous media saturated by an incompressible viscous fluid, also called \emph{poroelasticity} \cite{DetC93, Sho00}. The displacement of a material due to temperature changes gives a second application, which is commonly known as \emph{thermoelasticity} \cite{Bio56}. These applications have in common that the scaling of the coupling term is typically small, which we refer to hereafter as \emph{weakly coupled}. 

An alternative formulation of poroelasticity is obtained by introducing the fluid flux, also called \emph{Darcy velocity}, as an additional variable. This so-called \emph{three-field formulation} is used, for instance, in biomechanics to predict the deformation resulting from tumor growth in the brain \cite{RNM+03}. This model is advantageous if one is particularly interested in the fluid flux, since no subsequent calculation is needed. 
Further, it is well-suited for the extension to network structures, which are used, for instance, in medical applications with several pressure variables. As an example, we mention the investigation of cerebral edema, which may occur as a result of an unnatural accumulation of cerebrospinal fluid in the brain (hydrocephalus) \cite{VarCTHLTV16}. There, the brain is modeled as a poroelastic medium saturated by four fluid networks: one for the high-pressure arteries, one for the low-pressure arterioles and capillaries, one for the cerebrospinal and interstitial fluids, and one for the veins. 

As mentioned above, we deal with PDAEs such that a semi-discretization in space yields a differential-algebraic equation (DAE). 
As an immediate consequence, one cannot use explicit time-integration schemes \cite{KunM06}.  
The current literature mainly considers a time discretization by the implicit Euler scheme. 
In~\cite{ErnM09,MalP17} this is combined with a finite element discretization in space. The performed error analysis is based on a spatial projection, which is related to the corresponding stationary problem and thus coupled. 
A decoupled projection operator is introduced in~\cite{AltCMPP19, FuACMPP19} for heterogeneous poroelasticity. 
Other spatial discretization schemes such as continuous and discontinuous Galerkin methods are considered in~\cite{PhiW07a, PhiW07b, PhiW08}. 
Numerical methods based on the three-field formulation are discussed, e.g., in~\cite{HuRGZ17, HonK18, HonKLP19}. 
Finally, we mention~\cite{Fu19} where higher-order schemes in space and time are investigated. 

We emphasize that all mentioned schemes rely on an implicit time discretization. Thus, one needs to solve a large coupled system in each time step. This may be resolved by using a semi-explicit time-stepping method. 
Such a discretization decouples the elliptic and parabolic equation with the obvious advantage that the system is split into two subsystems with smaller dimensions. Further, the sparsity pattern of the matrices improves such that the construction of block-preconditioners is facilitated~\cite{LMW17}.  
A first attempt in this direction is presented in~\cite{WheG07} for the three-field model. There, however, an additional inner iteration is necessary to guarantee convergence. 
For yet another three-field model, \cite{JCLT19} proposes a similar semi-explicit scheme as in the present paper. 
However, there is no convergence analysis available. 

This paper provides theoretical justification for the decoupling of the elliptic and parabolic equation. We prove convergence of the semi-explicit Euler discretization in time combined with any stable spatial discretization of first order. 
This includes the classical two-field formulation (\Cref{thm:twoFieldConvergence}) as well as the multiple-network case if the exchange rates are small enough (\Cref{thm:networkConvergence}). 
Besides suitable regularity assumptions, we require a weak coupling condition, which is motivated and introduced in \Cref{sec:model}. The convergence proof is based on decoupled spatial projections and the observation that the semi-explicit scheme equals the implicit discretization of a related PDAE with delay term. Hereby, the fixed time delay~$\tau$ equals the step size for the time integration. 

The key technique for the convergence result is to prove that the original coupled PDAE and the associated delay PDAE only differ by an order~$\tau$, see \Cref{prop:delayTwoField} and \Cref{prop:delayNetwork} for further details. This novel proof technique allows us to establish the expected rates in space and time and an explicit quantification of the weak coupling condition. As an additional benefit from the delay approach, we observe that the weak coupling condition resembles a necessary condition for asymptotic stability of the semi-discretized delay DAE. 
This fact is also illustrated in the numerical experiments of~\Cref{sec:num}, showing that the stated condition is indeed sharp. 
We foresee that this strategy can be extended to study further time discretization schemes of semi-explicit type. 
\subsection*{Notation} 
Throughout the paper we write~$a \lesssim b$ to indicate that there exists a generic constant~$C$, independent of spatial and temporal discretization parameters, such that~$a \leq C b$. 
Further, we abbreviate Bochner spaces on the time interval~$[0,T]$ for a Banach space~$\calX$ by~$L^p(\calX) \vcentcolon = L^p(0,T;\calX)$, $W^{k,p}(\calX) \vcentcolon = W^{k,p}(0,T;\calX)$, and~$H^k(\calX) \vcentcolon = H^k(0,T;\calX)$, $p\ge 1$, $k\in\N$. 
%
%
\section{Elliptic-parabolic Problems}
\label{sec:model}
This section is devoted to an introduction to the considered elliptic-parabolic problems. 
For this, we consider the weak formulation of the classical two-field model as well as three-field and multiple-network systems. 
To keep the models fairly general we consider abstract formulations and gather all needed assumptions on the functions spaces and the involved bilinear forms. 
However, we also discuss practical examples for each case. 
These examples motivate the notion of \emph{weak coupling}, which we specify in terms of the system parameters.
%
%
\subsection{Two-field formulation}\label{sec:model:two}
We consider the weak formulation of elliptic-parabolic problems with two unknowns~$u\colon[0,T] \to \V$ and~$p\colon[0,T] \to \Q$, where~$T<\infty$ denotes the final time and $\V$, $\Q$ are Hilbert spaces which already include the boundary conditions, see the examples below.  
In the abstract setting, the solution pair $(u,p)$ should satisfy 
\begin{subequations}
\label{eqn:two}
\begin{align}
    a(u,v) - d(v, p) 
    &= \langle f, v \rangle, \label{eqn:two:a} \\
    d(\dot u, q) + c(\dot p,q) + b(p,q) 
    &= \langle g, q\rangle \label{eqn:two:b} 
\end{align}
for all test functions~$v\in \V$, $q \in \Q$ and sufficiently smooth source terms~$f\colon[0,T] \to \V^*$, $g\colon[0,T] \to \Q^*$. 
Hereby, $\V^*$ and $\Q^*$ denote the respective dual spaces for $\V$ and $\Q$ and $\langle\, \cdot\,,\,\cdot\,\rangle$ denotes the duality pairing. 
Further, we have initial conditions  
\begin{align}
	u(0) = u^0 \in \V, \qquad
    p(0) = p^0 \in \Q, \label{eqn:two:c}
\end{align}
\end{subequations}
which need to respect a consistency condition since system~\eqref{eqn:two} defines a PDAE. Although system~\eqref{eqn:two} is not in the standard semi-explicit form as analyzed in~\cite{EmmM13,Alt15,AltH18}, the consistency condition is explicitly given by equation~\eqref{eqn:two:a} and reads
\begin{displaymath}
	a(u^0,v) - d(v, p^0) = \langle f(0),v\rangle
\end{displaymath}
for all~$v\in \V$. 
We further assume that both ansatz spaces are part of a Gelfand triple, cf.~\cite[Ch.~23.4]{Zei90a}. For this, we introduce the pivot spaces~$\cHV$ and~$\cHQ$ such that~$\V, \cHV, \V^*$ and~$\Q, \cHQ, \Q^*$ each form a Gelfand triple. 
A typical example considers Sobolev spaces including the first weak derivative for~$\V$, $\Q$ and standard~$L^2$-spaces for~$\cHV$, $\cHQ$. 

For the involved bilinear forms~$a$, $b$, $c$, and $d$ we make the following assumptions: 
the bilinear form~$a\colon \V\times\V\to \R$ is symmetric, elliptic, and bounded in~$\V$, i.e.,  
\begin{equation*}
  a(u,u) 
  \geq c_a\Vert u \Vert_\V^2, \qquad
  a(u,v) 
  \leq C_a\Vert u \Vert_\V \Vert v\Vert_\V
\end{equation*}
for all~$u, v\in \V$. Note that this also defines a norm~$\Vert\cdot\Vert_a \vcentcolon= a(\cdot, \cdot)^{1/2}$, which is equivalent to the~$\V$-norm. 
Similarly, $b\colon \Q\times\Q\to \R$ is symmetric, elliptic, and bounded in~$\Q$, i.e.,  
\begin{equation*}
  b(p,p) 
  \geq c_b\Vert p \Vert_\Q^2, \qquad
  b(p,q)  
  \leq C_b\Vert p \Vert_\Q \Vert q\Vert_\Q
\end{equation*}
for all~$p, q\in \Q$, defining the norm~$\Vert\cdot\Vert_b$, which is equivalent to the~$\Q$-norm. 
In the examples in mind where $\V$ and~$\Q$ are Sobolev spaces, we assume additionally that the elliptic problems corresponding to~$a$ and~$b$ are~$H^2$-regular, cf.~\cite[Sect.~II.7]{Bra07}. Note that this includes an implicit condition on the spatial domain on which the bilinear forms are defined. 

Further, the bilinear form~$c\colon \Q\times\Q\to \R$ is symmetric, elliptic, and bounded in the pivot space~$\cHQ$, i.e.,  
\begin{equation*}
  c(p,p) 
  \geq c_c \Vert p \Vert_{\cHQ}^2, \qquad
  c(p,q) 
  \leq C_c \Vert p \Vert_\cHQ \Vert q\Vert_\cHQ
\end{equation*}
for all~$p, q\in \cHQ$. 
Thus, this defines a norm~$\Vert\cdot\Vert_c$, which is equivalent to the~$\cHQ$-norm
Finally, the coupling is defined through the bilinear form~$d\colon \V\times\Q\to \R$, which is bounded in terms of
\begin{equation*}
  d(u,p) 
  \leq C_d \Vert u \Vert_\V \Vert p\Vert_\cHQ, \qquad
  d(u,p) 
  \leq \tilde C_d \Vert u \Vert_\cHV \Vert p\Vert_\Q
\end{equation*}
for all~$u \in \V$ and $p\in \Q$. The possibility to choose whether to estimate $u$ or $p$ in the stronger norm will be used in the convergence analysis in \Cref{subsec:twoFieldDelayConvergence}.

We emphasize that the assumptions on the bilinear forms~$a$ and~$b$ imply the elliptic nature of equation~\eqref{eqn:two:a} and the parabolic nature of equation~\eqref{eqn:two:b}, which are coupled through the bilinear form~$d$. Furthermore, it is sufficient to prescribe an initial condition for~$p$ as equation~\eqref{eqn:two:a} is then uniquely solvable for~$u^0$.
\begin{remark}[DAE structure]
\label{rem:two:DAE}
Since~\eqref{eqn:two} represents a PDAE, a spatial discretization with parameter~$h$ leads to a DAE. Considering the time derivative of the first equation, the semi-discrete system can be written as 
\[
\begin{bmatrix} K_a & -D^T \\ D & M_c \end{bmatrix}
\begin{bmatrix} \dot u_h \\ \dot p_h \end{bmatrix}
=  
\begin{bmatrix} 0 & 0 \\ 0 & -K_b \end{bmatrix}
\begin{bmatrix} u_h \\ p_h \end{bmatrix}
+ \begin{bmatrix} \dot f_h \\ g_h \end{bmatrix}.
\]
Here, $K_a$ and~$K_b$ denote the stiffness matrices corresponding to the bilinear forms $a$ and~$b$, respectively, and~$M_c$ is the mass matrix resulting from~$c$. Under reasonable assumptions on the spatial discretization, the properties stated above imply that these three matrices are positive definite. 
Since the block-diagonal part of the matrix on the left is positive definite and the off-diagonal part is skew-symmetric, the matrix on the left is invertible, which implies that the original DAE has index~$1$, cf.~\cite[Sect.~2.2]{BreCP96}. In other words, only a single time derivative is necessary in order to reformulate the semi-discrete system as an ODE. 
\end{remark}
\begin{example}[Poroelasticity]
\label{exp:poro}
A well-known example, which fits in the framework of this subsection, is the linear poroelasticity problem in a bounded Lipschitz domain $\Omega \subseteq \mathbb{R}^d$ with $d\in\{2,3\}$, cf.~\cite{Biot41,Sho00}. In this application, we seek for the displacement field~$u\colon[0,T] \times \Omega \rightarrow \mathbb{R}$ and the pressure~$p\colon[0,T] \times \Omega \rightarrow \mathbb{R}$. Considering homogeneous Dirichlet boundary conditions, we have 
\begin{align*}
  a(u,v) \vcentcolon= \int_\Omega \sigma(u) : \varepsilon(v) \dx, \qquad 
  b(p,q) \vcentcolon= \int_\Omega \frac{\kappa}{\nu}\, \nabla p \cdot \nabla q \dx,\\
  c(p,q) \vcentcolon= \int_\Omega \frac{1}{M}\, p\, q \dx, \qquad 
  d(u,q) \vcentcolon= \int_\Omega \alpha\, (\nabla \cdot u)\, q \dx
\end{align*}
with spaces
\[
  \V:=[H^1_0(\Omega)]^d, \qquad
  \cHV:=[L^2(\Omega)]^d, \qquad 
  \Q:=H^1_0(\Omega), \qquad 
  \cHQ:=L^2(\Omega).
\]
The involved parameters include the stress tensor~$\sigma$ (defined by the Lam\'e coefficients $\lambda$ and $\mu$), the permeability~$\kappa$, the Biot-Willis fluid-solid coupling coefficient~$\alpha$, the Biot modulus~$M$, and the fluid viscosity~$\nu$. 
As usual in linear elasticity, $\varepsilon(u)$ denotes the symmetric gradient. 
The source terms satisfy~$f \equiv 0$ and~$g$ represents an injection or production process. 
We emphasize that the ellipticity of the bilinear form~$a$ follows from Korn's inequality. The resulting ellipticity constant is given by~$\mu$, see e.g.~\cite[Th.~6.3.4]{Cia88} for details.  
In many applications the coupling coefficient~$\alpha$ is smaller than one and thus much smaller than the Lam\'e coefficients, see e.g.~\cite[Sect.~3.3.4]{DetC93}. 
\end{example}
\begin{example}[Thermoelasticity]
\label{exp:thermo}
Since the linear thermoelastic problem is -- in mathematical terms -- equivalent to linear poroelasticity, system~\eqref{eqn:two} also applies to this case, cf.~\cite{Bio56}. 
Thermoelasticity describes the displacement of a material due to temperature changes. Similar to~\Cref{exp:poro}, the thermal expansion coefficient in the bilinear form~$d$, which is responsible for the coupling, is much smaller than the stress tensor, cf.~\cite{CalR14}. 
\end{example}
Motivated from the previous examples, we make the following assumption on the coupling of the elliptic and parabolic equation. 
\begin{assumption}[Weak coupling]
\label{ass:weakCouplingTwo}
We assume a weak coupling through the bilinear form~$d$ in the sense that
\begin{align*}
  C_d^2 \le c_a\, c_c. 
\end{align*}
\end{assumption}
As already mentioned and indicated in the two examples presented above, this assumption is satisfied in many applications. A detailed list of poroelastic parameters for different stones is given in~\cite{DetC93}. 
%
%
\subsection{Three-field formulation}\label{sec:model:three}
System~\eqref{eqn:two} can also be expressed in a three-field formulation, i.e., with an additional variable reflecting the flux of~$p$. Such a formulation may be beneficial if the flux is of particular interest and serves here as a first step in the direction of network models. 
For this formulation, we need three spaces, namely~$\V$, $\cZ$, and $\Q$, and aim to find the three unknowns~$u\colon[0,T] \to \V$, $y\colon[0,T] \to \cZ$, and~$p\colon[0,T] \to \Q$, which satisfy the system
\begin{subequations}
\label{eqn:three}
\begin{align}
	a(u,v) - d(v, p) 
	&= \langle f, v \rangle, \label{eqn:three:a} \\
	(y,z)_{\cHZ} - \dd(z,p) 
	&= 0, \label{eqn:three:b} \\
	d(\dot u, q) + c(\dot p,q) + \dd(y,q) 
	&= \langle g, q\rangle \label{eqn:three:c} 
\end{align}
\end{subequations}
for all test functions~$v\in \V$, $z\in \cZ$, and $q \in \Q$. 
The corresponding initial condition reads~$p(\,\cdot\,,0) = p^0 \in \Q$, which defines~$u^0\in\V$ through equation~\eqref{eqn:three:a}. 

In the three-field formulation, we assume Gelfand triples~$\V, \cHV, \V^*$ and $\cZ, \cHZ, \cZ^*$. The space~$\Q$ is typically an $L^2$-space such that no pivot space is needed or, in other words, $\Q=\cHQ$. 
For the bilinear forms~$a$ and~$c$, we have the same assumptions as in the previous subsection. Note, however, that the assumptions on~$\Q$ imply that~$c$ is now elliptic on~$\Q$. Further, $d\colon \V\times\Q\to \R$ is bounded such that 
\begin{equation*}
d(u,p) 
\leq C_d \Vert u \Vert_\V \Vert p\Vert_\Q 
\end{equation*}
for all~$u \in \V$ and $p\in \Q$. For the newly introduced bilinear form~$\dd\colon \cZ\times \Q \to \R$ we assume continuity in the sense of
\[
\dd(y,p) 
\le C_{\dd} \Vert y \Vert_\cZ \Vert p\Vert_\Q
\]
for all~$y \in \cZ$ and $p\in \Q$. 
\begin{example}[Poroelasticity]
\label{exp:poro:threeField}
We revisit Example~\ref{exp:poro}. 
The corresponding three-field formulation seeks for the displacement~$u$, the fluid flux or Darcy velocity~$y$, and the pore pressure~$p$, cf.~\cite{Biot41}. In the strong form the system reads
\begin{subequations}
\label{eqn:poro:threeField}
\begin{align}
	-\nabla \cdot ( \sigma (u) ) + \nabla (\alpha p) 
	&= f, \\
	y + \tfrac{\kappa}{\nu}\, \nabla p 
	&= 0, \\
	- \alpha\, \nabla \cdot \dot u - \tfrac{1}{M} \dot p - \nabla \cdot y 
	&= g
\end{align}    
\end{subequations}
with an initial condition for~$p$. 
The natural boundary conditions in this setting are homogeneous Dirichlet boundary conditions for the displacement and homogeneous Neumann boundary conditions for the pressure that can be reformulated as a boundary condition for the fluid flux. 
In this case, the spaces are given by 
\[ 
  \V :=  H^1_0(\Omega)]^d, \quad 
  \cHV := [L^2(\Omega)]^d, \quad
  \cZ := H_0(\ddiv, \Omega), \quad
  \cHZ := [L^2(\Omega)]^d, \quad
  \Q := L^2(\Omega).
\]
Here, $H_0(\ddiv, \Omega)$ denotes the space of functions~$z \in [L^2(\Omega)]^d$ with $\nabla\cdot z \in L^2(\Omega)$ and~$z\cdot n = 0$ on~$\partial \Omega$ with unit normal vector~$n$. 
In order to guarantee uniqueness of the pressure, one often assumes some additional condition such as a vanishing integral.  	 
This example fits in the framework of~\eqref{eqn:three} with 
\[
  \dd(z,p) := \int_\Omega \Big(\nabla \cdot \big(\sqrt{\tfrac{\kappa}{\nu}}\, z\big)\Big)\, p \dx.
\]
and~$(\,\cdot\,,\,\cdot\,)_\cHZ$ as the standard~$L^2$-norm. 
\end{example}
At this point, it would be reasonable to consider a similar weak coupling condition as in~\Cref{ass:weakCouplingTwo}. However, we will discuss this in the following subsection where we extend the three-field model to the multiple-network case. 
%
%
\subsection{Multiple-network systems}\label{sec:model:network} 
The previously introduced three-field formulation can be easily extended to multiple-networks as they are used in certain brain models, see, e.g.,~\cite{VarCTHLTV16}. 
Note, however, that the extension to multiple-networks is also possible for the two-field model. 

We consider the following abstract problem: 
Find~$u\colon[0,T] \to \V$, $y_i\colon[0,T] \to \cZ$, and~$p_i\colon[0,T] \to \Q$ for~$i=1,\dots,\m$, which satisfy the system
\begin{subequations}
\label{eqn:network}
\begin{align}
	a(u,v) - \sum_{i=1}^\m d_i(v, p_i) 
	&= \langle f, v \rangle, \label{eqn:network:a} \\
	(y_i,z)_{\cHZ} - \dd_i(z,p_i) 
	&= 0, \label{eqn:network:b} \\
	d_i(\dot u, q) + c(\dot p_i,q) + \dd_i(y_i,q) - \sum_{j\neq i} \beta_{ij} (p_i-p_j, q)_\Q 
	&= \langle g_i, q\rangle \label{eqn:network:c} 
\end{align}
\end{subequations}
for all test functions~$v\in \V$, $z\in \cZ$, and $q \in \Q$. 
Note that equations~\eqref{eqn:network:b} and~\eqref{eqn:network:c} have to be considered for $i=1,\dots, \m$ and, thus, represent~$\m$ equations each. 
Initial conditions are given by~$p_i(\,\cdot\,,0) = p_i^0 \in \Q$ and define~$u^0\in\V$ as before. 
The assumptions on the spaces and bilinear forms are as in the three-field model of~\Cref{sec:model:three}. This means that each of the bilinear forms~$d_i\colon \V\times \Q \to \R$ and~$\dd_i\colon \cZ\times \Q \to \R$ behave as~$d$ and~$\dd$, respectively. The corresponding continuity constants are denoted by~$C_{d_i}$ and~$C_{\dd_i}$. The presence of additional variables calls for an adjustment of the weak coupling condition.
\begin{assumption}[Weak coupling, network case]
\label{ass:weakCouplingNetwork}
We assume a weak coupling through the bilinear forms~$d_i$ in the sense that
\begin{align*}
  \sum_{i=1}^\m  C_{d_i}^2 \le c_a\, c_c. 
\end{align*}
\end{assumption}
%
Another coupling is described through the parameters~$\beta_{ij}$ in equation~\eqref{eqn:network:c}. 
In the case where they represent exchange rates from one network to the other, it is reasonable to assume symmetry, i.e., $\beta_{ij}=\beta_{ji}$, cf.~\cite{TulV11}. Here, however, we only assume that the coupling parameters are sufficiently small. 
\begin{assumption}[Small exchange rates]
\label{ass:beta}
We assume small exchange rates between the networks, i.e., we assume that 
\[
  \beta :=\max_{i,j \in \{1,\dots, \m\}} |\beta_{ij}|
\]
is small. 
In particular, we assume that~$6\beta\, (\m-1) \le c_c$.
\end{assumption}
\begin{remark}
The introduced network model may be easily extended to the case with different ansatz spaces~$\Q_i$ and~$\cZ_i$ for the variables~$p_i$ and~$y_i$, respectively. 
This may be helpful in order to include varying boundary conditions for the different variables. 
\end{remark}
\begin{remark}[DAE structure, network case]
With the same arguments as in the two-field case, one can show that the semi-discretization of the multiple-network formulation~\eqref{eqn:network} is a DAE of index 1. This follows again by the skew-symmetric block structure and only requires the invertibility of the stiffness and mass matrices. 
\end{remark}
\begin{example}[Poroelastic brain model]\label{exp:brain:network}
In medical applications, multiple-network poroelastic models of the brain with different pressures to distinguish vessel types can be used to investigate cerebral edema, cf.~\cite{VarCTHLTV16}. 
This particular model can be described as an extension of the three-field poroelastic formulation with~$\m=4$. 
Thus, the same spaces and variables as in Example~\ref{exp:poro:threeField} can be used but with multiple pressures $p_i$ and fluid fluxes~$y_i$, $i=1,\dots,4$, in a bounded Lipschitz domain $\Omega \subseteq \R^d$. In this application, no external forces or injections are present ($f\equiv 0$ and $g\equiv 0$) and only hydrostatic pressure gradients drive the system, scaled by very small coupling constants~$\beta_{ij}$, $i,j=1,\dots,4$. 
Thus, \Cref{ass:beta} is satisfied. 
The bilinear forms are chosen as in Example~\ref{exp:poro} except for $d_i$, $\dd_i$, which are defined by 
\begin{align*}
 d_i(v,q) &:= \int_\Omega \alpha_i\, (\nabla \cdot v)\, q \dx, &
 \dd_i(z,q) &:= \int_\Omega \Big(\nabla \cdot \big(\sqrt{\tfrac{\kappa_i}{\nu_i}}\, z\big)\Big)\, q \dx.
\end{align*}
In this medical application, however, inhomogeneous boundary conditions for $p_i$ are considered on two boundary parts (skull and ventricle surface). Thus, additional boundary terms have to be taken into account, see~\cite{VarCTHLTV16}.
\end{example}
%
%
\section{Semi-Explicit Discretization of the Two-Field Model}\label{sec:semiTwoField}
For the numerical solution of~\eqref{eqn:two}, we propose the combination of a {\em semi-explicit} time discretization with step size~$\tau$ and a conforming spatial finite element discretization with mesh size~$h$. 
Thus, we consider a partition of~$[0,T]$ with time points~$t_n=n\,\tau$.
The fully discretized system then reads
\begin{subequations}
\label{eqn:discrete}
\begin{align}
	a(u^{n+1}_h,v_h) - d(v_h, p^{n}_h) &= \langle f^{n+1},v_h\rangle, \label{eqn:discrete:a} \\
	d(D_\tau u^{n+1}_h, q_h) + c(D_\tau p^{n+1}_h,q_h) + b(p^{n+1}_h,q_h) &= \langle g^{n+1},q_h\rangle \label{eqn:discrete:b} 
\end{align}
\end{subequations}
with test functions $v_h\in V_h$ and $q_h \in Q_h$. Hereby, $V_h \subseteq \V$ and $Q_h\subseteq \Q$ denote suitable finite-dimensional spaces resulting from the spatial discretization. The discrete time derivative is denoted by $D_\tau u^{n+1}_h \vcentcolon= (u^{n+1}_h - u^{n}_h) / \tau$ and $u^{n}_h$, $p^{n}_h$ are the resulting approximations of $u(t_{n})$ and $p(t_{n})$, respectively. For the right-hand sides, we introduce $f^{n}\vcentcolon=f(t_{n})$ and~$g^{n}\vcentcolon=g(t_{n})$. For the initial data, we assume~$u^0_h\in V_h$ and~$p^0_h \in Q_h$ to be consistent in the sense of~$a(u^0,v_h) - d(v_h, p^0_h) = \langle f^0,v_h\rangle$.

We emphasize that the scheme~\eqref{eqn:discrete} is semi-explicit in time due to the term~$p^{n}_h$ in equation~\eqref{eqn:discrete:a}. 
The corresponding implicit scheme (with~$p^{n+1}_h$) is analyzed in~\cite{ErnM09}. 
\begin{remark}
The semi-explicit scheme~\eqref{eqn:discrete} may be interpreted as a co-simulation \cite{Mie89,BY11} (also known as waveform relaxation or dynamic iteration) of equations~\eqref{eqn:two:a} and~\eqref{eqn:two:b}.
\end{remark}
%
The advantage of the proposed scheme over a full-implicit discretization is that we can solve sequentially for~$u^{n+1}_h$ by~\eqref{eqn:discrete:a} and afterwards for~$p^{n+1}_h$ by~\eqref{eqn:discrete:b}. Hence, we solve two smaller systems rather than one large system in each time step. Further, the sparsity pattern improves, since both systems only include the solution with standard mass and stiffness matrices. 

Our convergence proof is based on an elliptic-parabolic problem with an additional delay term whose solution~$(\bar u, \bar p)$ only differs by an order of~$\tau$ from the original solution~$(u, p)$. 
The delay system is discussed in the following subsection. 
%
\subsection{A related delay system}
The semi-explicit scheme~\eqref{eqn:discrete} can also be obtained by applying the {\em implicit} Euler method to the delay system
\begin{subequations}
\label{eqn:delay:two}
\begin{align}
	a(\bar{u},v) - d(v, \bar{p}(\,\cdot-\tau)) 
	&= \langle f, v \rangle, \label{eqn:delay:two:a} \\
	d(\dot {\bar{u}}, q) + c(\dot {\bar{p}},q) + b(\bar{p},q) 
	&= \langle g, q\rangle \label{eqn:delay:two:b} 
\end{align}
\end{subequations}
for test functions~$v\in \V$ and~$q \in \Q$.
Note that this changes the nature of the system in the sense that we now need a \emph{history function} for~$\bar{p}$ in $[-\tau, 0]$ rather than only an initial value. Thus, we set~$\bar{p}\big|_{[-\tau, 0]}(t) = \Phi(t)$ and demand
\begin{equation}
\label{eqn:historyTwo}
  \Phi(-\tau) = \Phi(0) = p^0, \qquad  
  \Phi \in C^\infty([-\tau, 0], \Q).
\end{equation}
With this particular history function we have~$\bar p(0) = \Phi(0) = p^0$ and by equation~\eqref{eqn:delay:two:a} we conclude~$\bar u(0) = u^0$, since  
\begin{equation*}
  a(\bar u(0),v)
  = \langle f(0), v \rangle + d(v, \Phi(-\tau)) 
  = \langle f(0), v \rangle + d(v, p^0).
\end{equation*} 
Let us emphasize that the time delay equals the temporal step size $\tau$.
\begin{proposition}
\label{prop:delayTwoField}
Assume sufficiently smooth right-hand sides~$f$ and~$g$ and a history function~$\Phi$ as defined in~\eqref{eqn:historyTwo} such that the solution $(\bar{u},\bar{p})$ of the delay system \eqref{eqn:delay:two} satisfies~${\bar p}\in W^{2,\infty}(\cHQ\!)$. 
Then, the solutions to~\eqref{eqn:two} and~\eqref{eqn:delay:two} are equal up to a term of order~$\tau$. More precisely, we have for almost all~$t\in[0,T]$ that 
\[
  \Vert \bar p(t) - p(t) \Vert^2_\Q
  \lesssim \tau^2\, t\, \big( \Vert \ddot{\Phi}\Vert_{L^\infty(-\tau,0;\cHQ\!)}^2 + \Vert \ddot{\bar p}\Vert_{L^\infty(\cHQ\!)}^2 \big)
\]
and 
\[
  \Vert \bar u(t) - u(t)\Vert^2_\V
  \lesssim \tau^2\, \Vert \dot{\bar p}\Vert^2_{L^2(\cHQ\!)} 
  + \tau^2\, (1+\tau^2)\, t\, \big( \Vert \ddot{\Phi}\Vert_{L^\infty(-\tau,0;\cHQ\!)}^2 + \Vert \ddot{\bar p}\Vert_{L^\infty(\cHQ\!)}^2 \big). 
\]
\end{proposition}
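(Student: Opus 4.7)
The plan is to interpret~\eqref{eqn:delay:two} as a small perturbation of~\eqref{eqn:two} and carry out an energy estimate on the error. Setting $e_u \vcentcolon= \bar u - u$, $e_p \vcentcolon= \bar p - p$, and the delay residual $R(t) \vcentcolon= \bar p(t-\tau) - \bar p(t) = -\int_{t-\tau}^t \dot{\bar p}(s) \ds$, the identity $\bar p(t-\tau) - p(t) = e_p(t) + R(t)$ allows us to subtract~\eqref{eqn:two} from~\eqref{eqn:delay:two} to obtain
\begin{align*}
  a(e_u, v) &= d(v, e_p) + d(v, R), \\
  d(\dot e_u, q) + c(\dot e_p, q) + b(e_p, q) &= 0
\end{align*}
for all admissible test functions. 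The compatibility $\Phi(-\tau) = \Phi(0) = p^0$, together with the consistency condition of~\eqref{eqn:two} evaluated at $t=0$, immediately forces $e_u(0) = e_p(0) = 0$.

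For the key step I would differentiate the elliptic error equation in time and test it with $v = \dot e_u$, test the parabolic error equation with $q = \dot e_p$, and add the two identities. The coupling terms~$d(\dot e_u, \dot e_p)$ cancel and the outcome is the energy identity
\[
  \|\dot e_u\|_a^2 + \|\dot e_p\|_c^2 + \tfrac12 \tddt \|e_p\|_b^2 = d(\dot e_u, \dot R).
\]
Estimating the right-hand side by $C_d \|\dot e_u\|_\V \|\dot R\|_\cHQ$ and applying Young's inequality together with coercivity of $a$ allows us to absorb half of $\|\dot e_u\|_a^2$ on the left. Integration over $[0,t]$ and the vanishing initial data then yield $\|e_p(t)\|_b^2 \lesssim \int_0^t \|\dot R(s)\|_\cHQ^2 \ds$. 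Writing $\dot R(s) = -\int_{s-\tau}^s \ddot{\bar p}(r) \dr$, with the convention that $\bar p$ is extended by $\Phi$ on $[-\tau, 0]$, and using Cauchy--Schwarz gives $\|\dot R(s)\|_\cHQ^2 \lesssim \tau^2 \bigl(\|\ddot\Phi\|_{L^\infty(-\tau, 0; \cHQ)}^2 + \|\ddot{\bar p}\|_{L^\infty(\cHQ)}^2\bigr)$, so that integration and norm equivalence of $\|\cdot\|_b$ with $\|\cdot\|_\Q$ deliver the first claim.

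For the $e_u$-bound I would test the elliptic error equation pointwise in $t$ with $v = e_u$, using coercivity of $a$ together with $d(e_u,\,\cdot\,) \le C_d \|e_u\|_\V \|\,\cdot\,\|_\cHQ$ to obtain $\|e_u(t)\|_\V^2 \lesssim \|e_p(t)\|_\cHQ^2 + \|R(t)\|_\cHQ^2$. The first term is controlled by the $e_p$-estimate above, while a direct Cauchy--Schwarz bound on $R(t) = -\int_{t-\tau}^t \dot{\bar p}(s)\ds$, supplemented by a Taylor-expansion remainder involving~$\ddot{\bar p}$, accounts for the $\tau^2 \|\dot{\bar p}\|_{L^2(\cHQ)}^2$ term and for the $(1+\tau^2)$ factor. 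The most delicate point I anticipate is the handling of the initial layer $s \in [0,\tau]$, where $\dot R(s)$ straddles the history function and the solution, so that the compatibility~\eqref{eqn:historyTwo} is crucial in converting the smoothness of $\Phi$ into the $\|\ddot\Phi\|_{L^\infty(-\tau,0;\cHQ)}^2$ contribution; a secondary technical concern is the justification of time-differentiating the elliptic equation, which is ensured by the index-$1$ DAE structure (\Cref{rem:two:DAE}) combined with the assumed $W^{2,\infty}(\cHQ)$-regularity of $\bar p$.
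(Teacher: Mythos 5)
Your derivation of the first estimate is essentially the paper's own argument: the same error system, the same differentiation of the elliptic equation, the same cancellation of the coupling terms after testing with $v=\dot e_u$ and $q=\dot e_p$, and the same $O(\tau)$ bound on the residual derivative (you write $\dot R(s)=-\int_{s-\tau}^s\ddot{\bar p}(r)\dr$ where the paper uses the Lagrange form of the Taylor remainder; these are interchangeable). For the $e_u$-bound, however, you take a genuinely different and more elementary route: a pointwise-in-time elliptic estimate $\|e_u(t)\|_\V\lesssim\|e_p(t)\|_\cHQ+\|R(t)\|_\cHQ$ obtained by testing the undifferentiated error equation with $v=e_u$, whereas the paper runs a second energy identity (testing with $v=\dot e_u$ and $q=e_p$, integrating, and reusing the already-established bound on $\int_0^t\|\dot e_u\|_\V^2\ds$). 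Your version is shorter and avoids the second Gr\"onwall-free bookkeeping, but it pays a price in the constant: at a fixed time $t$ one only has $\|R(t)\|_\cHQ\le\tau\,\|\dot{\bar p}\|_{L^\infty(t-\tau,t;\cHQ)}$, so your route yields $\tau^2\|\dot{\bar p}\|^2_{L^\infty(\cHQ)}$ in place of the $\tau^2\|\dot{\bar p}\|^2_{L^2(\cHQ)}$ appearing in the statement (the Cauchy--Schwarz alternative $\|R(t)\|_\cHQ^2\le\tau\int_{t-\tau}^t\|\dot{\bar p}\|_\cHQ^2\ds$ loses a full power of $\tau$ when majorized by the global $L^2$-norm). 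The paper's time-integrated identity is precisely what converts this term into the $L^2$-in-time norm. Since $\bar p\in W^{2,\infty}(\cHQ)$ is assumed, your bound is still finite and the order-$\tau$ conclusion is unaffected, but the literal inequality of the proposition is not reproduced; you should either state the $L^\infty$ variant or switch to the paper's second energy step. Your concluding remarks on the initial layer and on justifying the time differentiation are consistent with how the paper handles these points (via the choice of history function \eqref{eqn:historyTwo} and the regularity established in the appendix).
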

\begin{proof}
We consider the Taylor expansions of $\bar p$ and~$\dot{\bar p}$, 
\begin{align}
\label{eqn:taylor}
\bar p(t-\tau) 
  = \bar p(t) - \tau\, \dot{\bar p}(t) + \tfrac{1}{2} \tau^2\, \ddot{\bar p}(\zeta_t), \qquad
  \dot{\bar p}(t-\tau)   
  = \dot{\bar p}(t) - \tau\, \ddot{\bar p}(\xi_t)
\end{align}
for some~$\zeta_t, \xi_t \in (t-\tau, t) \subseteq (-\tau, T]$. 
With this, the differences~$e_u := \bar u - u$ and~$e_p := \bar p - p$ satisfy the system
\begin{subequations}
\label{eqn:lemDelayTwoField}
\begin{align}
	a(e_u,v) - d(v, e_p) 
	&= - \tau\, d(v, \dot{\bar p}) + \tfrac{1}{2} \tau^2\, d(v, \ddot{\bar p}(\zeta_t)) \label{eqn:lemDelayTwoField:a} \\
	d(\dot e_u, q) + c(\dot e_p,q) + b(e_p,q) 
	&= 0 \label{eqn:lemDelayTwoField:b} 
\end{align}
\end{subequations}
for all test functions~$v\in\V$, $q\in\Q$.  
Note that we have~$e_u(0)=0$ and~$e_p(0)=0$ due to the particular choice of the history function~$\Phi$ in \eqref{eqn:historyTwo}.
On the other hand, considering the derivatives of~\eqref{eqn:two:a} and~\eqref{eqn:delay:two:a} and the Taylor expansion of~$\dot{\bar p}$ gives 
\begin{align}
\label{eqn:lemDelayTwoField:derivative}
  a(\dot e_u,v) - d(v, \dot e_p) 
  = - \tau\, d(v, \ddot{\bar p}(\xi_t)).
\end{align}
Taking the sum of~\eqref{eqn:lemDelayTwoField:derivative} with test function~$v=\dot e_u$ and~\eqref{eqn:lemDelayTwoField:b} with~$q=\dot e_p$, we get with the ellipticity of the bilinear forms and Young's inequality, 
\[
  \Vert \dot e_u(t) \Vert_\V^2 + \Vert \dot e_p(t) \Vert_\cHQ^2 + \tfrac 12 \tddt  \Vert e_p(t) \Vert_b^2
  \lesssim \tau^2\, \Vert \ddot{\bar p}(\xi_t) \Vert_\cHQ^2.
\]
Thus, we conclude by integration over~$[0,t]$ that  
\begin{align}
\label{eqn:inProofDelay}
  \int_0^t \Vert \dot e_u(s)\Vert_\V^2 \ds + \Vert e_p(t) \Vert_\Q^2
  \lesssim \tau^2\,t\, \Vert \ddot{\bar p}\Vert_{L^\infty(-\tau,t;\cHQ\!)}^2.
\end{align}
Besides, the sum of~\eqref{eqn:lemDelayTwoField:a} with~$v=\dot e_u$ and~\eqref{eqn:lemDelayTwoField:b} with~$q=e_p$ yields the estimate 
\begin{align*}
  \tfrac 12\tddt \Vert e_u(t)\Vert_a^2 + \tfrac 12\tddt \Vert e_p(t)\Vert_c^2 + \Vert e_p(t)\Vert_b^2
  &= - \tau\, d(\dot e_u(t), \dot{\bar p}(t)) + \tfrac 12\tau^2\, d(\dot e_u(t), \ddot{\bar p}(\zeta_t)) \\ 
  &\lesssim \Vert \dot e_u(t) \Vert_\V^2
  + \tau^2\, \Vert \dot{\bar p}(t)\Vert_\cHQ^2 
  + \tau^4\, \Vert \ddot{\bar p}(\zeta_t)\Vert_\cHQ^2.  
\end{align*}
Integration over~$[0,t]$ and application of the estimate~\eqref{eqn:inProofDelay} then leads to 
\[
  \Vert e_u (t)\Vert_\V^2
  \lesssim \tau^2\, \Vert \dot{\bar p}\Vert_{L^2(0,t;\cHQ\!)}^2 
  + \tau^2\, (1+\tau^2)\, t\, \Vert \ddot{\bar p}\Vert_{L^\infty(-\tau,t;\cHQ\!)}^2. 
\]
Noting that~$\ddot{\bar p}|_{[-\tau,0]} = \ddot{\Phi}$ finally completes the proof. 
\end{proof}
The previous result states that the solutions~$(u, p)$ and~$(\bar u, \bar p)$ are close as long as the solution of the related delay system stays stable. 
We would like to point out that the stability of delay PDAEs is a delicate topic, particularly in the current setting where the system structure is of \emph{neutral type}. 
This means that~$\dot{\bar{p}}$ at time~$t$ depends on~$\dot{\bar{p}}(t-\tau)$. 
From the PDE side one can expect difficulties, since even delay systems of \emph{retarded type}, where the differential equation only includes a delay of the form~${\bar{p}}(t-\tau)$, may not gain smoothness in contrast to the finite-dimensional setting~\cite{AltZ18}. 
On the other hand, a DAE with a delay term in the constraint may even behave like an \emph{advanced} equation \cite{Cam80}, i.e., the solution at time $t$ depends on the derivative of the solution at time $t-\tau$. As a direct consequence one can only expect solutions in a distributional setting \cite{TreU19}. Classical solutions may be obtained, if a certain structure is imposed on the delay DAE and the history function satisfies so-called \emph{splicing conditions} \cite{Ung18}. 
For our particular case, we can prove that~$\ddot{\bar p}$ indeed stays uniformly bounded for smooth data, see \Cref{sec:neutralDelayPDE} for further details.

For an infinite time interval, i.e., $T=\infty$, such a stability result is only possible under a weak coupling condition in the spirit of~\Cref{ass:weakCouplingTwo}. We illustrate this in the following finite-dimensional example. 
\begin{example}
	\label{ex:spectralRadius}
A spatial discretization of the delay PDAE \eqref{eqn:delay:two} can be written as
\begin{subequations}
\label{eqn:two:DDAE}
\begin{align}
	K_a \bar{u}_h(t) - D^T \bar{p}_h(t-\tau) &= f_h(t),\\
	D\dot{\bar{u}}_h(t) + M_c\dot{\bar{p}}_h(t) + K_b \bar{p}_h(t) &= g_h(t),
\end{align}
\end{subequations}
with the matrices from \Cref{rem:two:DAE}.
Solving the first equation for $\bar{u}_h$ and substituting in the second equation results in the neutral delay differential equation
\begin{equation}
	\label{eqn:twoDelay:neutral}
	M_c\dot{\bar{p}}_h(t) + K_b \bar{p}_h(t) = -DK_a^{-1}D^T\dot{\bar{p}}_h(t-\tau) - K_a^{-1}\dot{f}_h(t) + g_h(t).
\end{equation}
A necessary condition for the \emph{asymptotic stability} of \eqref{eqn:twoDelay:neutral} is that the spectral radius of $M_c^{-1}DK_a^{-1}D^T$ is strictly smaller than one, see \cite[Th.~3.20]{GuKC03} for further details. Note that this condition quantitatively resembles the weak coupling condition in  \Cref{ass:weakCouplingTwo}. 
\end{example}
We emphasize that even if~$\ddot{\bar p}$ stays bounded for finite times~$T$, it may become very large. 
A $T$-independent bound requires again a weak coupling condition, which is also observable numerically, cf.~\Cref{sec:num:weakCoupling}. 
%
%
\subsection{Spatial projection}
Based on the two-field formulation \eqref{eqn:two} and discrete spaces $V_h$ and $Q_h$, we define the projections $\Ru \colon \V\to  V_h$ and $\Rp \colon \Q \to Q_h$ by
\begin{align}
\label{eqn:projTwoFieldU}	
	a(\Ru u,v_h) 
	= a(u,v_h)
\end{align}
for all $v_h \in V_h$ and
\begin{align}
\label{eqn:projTwoFieldP}
	b(\Rp p,q_h) 
	= b(p,q_h)
\end{align}
for all $q_{h} \in Q_{h}$. 
Note that~$\Ru$ and $\Rp$ are well-defined due to the ellipticity of~$a$ and~$b$.
For the following error analysis, we need certain approximation properties of the projectors. 
\begin{assumption}[Spatial projection]
\label{ass:projErrorTwoField}
Consider $u \in \V$ and $p \in \Q$. We assume that the projection errors satisfy
\begin{align*}
  \|u - \Ru u\|_\cHV &\lesssim h\,\|u\|_\V, \qquad\quad
  \|u - \Ru u\|_\V \lesssim h\,\|\nabla^2 u\|_\cHV\\
  \|u - \Rp p\|_\cHQ &\lesssim h\,\|p\|_\Q, \qquad\quad 
  \|p - \Rp p\|_\Q \lesssim h\,\|\nabla^2 p\|_\cHQ, 
\end{align*}
if the second derivatives $\nabla^2 u$ and $\nabla^2 p$ are bounded in $\cHV$ and $\cHQ$, respectively. 
\end{assumption}
\begin{example}
\label{ex:P1Two}
For the spaces~$\V = [H^1_0(\Omega)]^d$, $\cHV = [L^2(\Omega)]^d$, $\Q = H^1_0(\Omega)$, and $\cHQ = L^2(\Omega)$, \Cref{ass:projErrorTwoField} is satisfied if~$V_h$ and~$Q_h$ equal the standard~$P_1$ Lagrange finite element spaces, see e.g.~\cite[Ch.~II.6-II.7]{Bra07} for more details.
\end{example}
%
%
\subsection{Full discretization of the delay system}
\label{subsec:twoFieldDelayConvergence}
As mentioned above, we prove the convergence of the semi-explicit scheme~\eqref{eqn:discrete} by the interpretation as an implicit discretization of the delay system~\eqref{eqn:delay:two}. 
The following proposition quantifies the error estimate between the fully discrete solution and the exact solution to the delay system. 
\begin{proposition}
\label{prop:discErrorTwoField}
Suppose \Cref{ass:weakCouplingTwo,ass:projErrorTwoField} and the assumptions of~\Cref{prop:delayTwoField} hold, as well as $\nabla^2 \bar u \in L^\infty(\cHV)$ and~$\nabla^2 \bar p\in L^\infty(\cHQ)$. 
Then, taking initial data~$u_h^0\in V_h$ and~ $p_h^0\in Q_h$ with 
\[
  \|\Ru u^0 - u_h^0\|_\V + \|\Rp p^0 - p^0_h\|_\cHQ 
  \lesssim\, h
\]
implies that for all $n \le T/\tau$ the solution of the fully discretized system~\eqref{eqn:discrete} satisfies 
\begin{equation*}
\label{eq:discErrorTwo}
  \|\bar u(t_n) - u^n_h \|^2_{\V} + \|\bar p(t_n) - p^n_h \|^2_{\cHQ} + 
  \sum_{k=1}^{n} \tau\, \|\bar p(t_k) - p^k_h \|_{\Q}^2 
  \ \lesssim\ \mathrm{e}^{t_n}(1+t_n)\, ( h^2 + \tau^2).
\end{equation*}
\end{proposition}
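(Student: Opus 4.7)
The plan is to exploit that the semi-explicit scheme \eqref{eqn:discrete} is exactly the implicit Euler discretization of the delay system \eqref{eqn:delay:two}: evaluating \eqref{eqn:delay:two:a} at $t_{n+1}$ gives the delay $\bar p(t_{n+1}-\tau)=\bar p(t_n)$, which matches the explicit $p^n_h$ appearing in \eqref{eqn:discrete:a}. Convergence to $(\bar u,\bar p)$ then reduces to a standard Ritz--projection/implicit--Euler analysis applied to the delay PDAE. The final bound for $(u,p)$ will follow by combining this result with \Cref{prop:delayTwoField}, but that step is not needed here.

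First, I split the errors via the Ritz projections from \eqref{eqn:projTwoFieldU}--\eqref{eqn:projTwoFieldP}: write $\bar u(t_n)-u^n_h=\rho^n_u+e^n_u$ with $\rho^n_u:=\bar u(t_n)-\Ru\bar u(t_n)$ and $e^n_u:=\Ru\bar u(t_n)-u^n_h$, and analogously for $p$. Using \Cref{ass:projErrorTwoField} together with the hypotheses $\nabla^2\bar u\in L^\infty(\cHV)$ and $\nabla^2\bar p\in L^\infty(\cHQ)$, the $\rho$-contributions to the left-hand side of \eqref{eq:discErrorTwo} are immediately of order $h$. It remains to bound $e^n_u,e^n_p$ in the corresponding norms.

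Second, I derive the discrete error equations for $(e^n_u,e^n_p)$ by subtracting \eqref{eqn:discrete} at time index $n+1$ from \eqref{eqn:delay:two} evaluated at $t_{n+1}$ and tested against $v_h,q_h$. Galerkin orthogonality of $\Ru$ and $\Rp$ removes the elliptic projection errors, and what remains on the right-hand side are a spatial consistency term $d(v_h,\rho^n_p)$ in the elliptic equation (controlled by $\tilde C_d h\|\bar p\|_\Q$ via the second continuity bound on $d$) and the temporal consistency terms $d(D_\tau\Ru\bar u(t_{n+1})-\dot{\bar u}(t_{n+1}),q_h)$ and $c(D_\tau\Rp\bar p(t_{n+1})-\dot{\bar p}(t_{n+1}),q_h)$ in the parabolic equation, both of order $\tau+h$ by Taylor expansion. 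Note that the lag $t_n$ versus $t_{n+1}$ in the coupling produces only such $O(\tau)+O(h)$ terms because the delay precisely matches the step size.

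Third, I close the estimate via an energy argument mirroring the one in the proof of \Cref{prop:delayTwoField}. I apply $D_\tau$ to the discrete elliptic error equation, test it with $D_\tau e^{n+1}_u$, test the discrete parabolic error equation with $e^{n+1}_p$, and sum. The coercivities $c_a,c_b,c_c$ yield $\|D_\tau e^{n+1}_u\|_\V^2$, a telescoping difference of $\|e^n_p\|_c^2$, and an $\|e^{n+1}_p\|_b^2$ term. The cross contribution $d(D_\tau e^{n+1}_u,e^{n+1}_p)$ is the delicate one: here I use Young's inequality with weights tuned to \Cref{ass:weakCouplingTwo} (that is, $C_d^2\le c_a c_c$) so that the term is strictly absorbed into the $\|D_\tau e^{n+1}_u\|_\V^2$ and $\|e^{n+1}_p\|_c^2$ contributions on the left, leaving a positive remainder. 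Summing over $n$, using the assumption on the initial data to control $e^0_u,e^0_p$, and invoking the discrete Gronwall lemma then yields $\|e^n_u\|_\V^2+\|e^n_p\|_\cHQ^2+\sum_{k\le n}\tau\|e^k_p\|_\Q^2\lesssim \mathrm{e}^{t_n}(1+t_n)(h^2+\tau^2)$. Adding back the projection errors $\rho^n_u,\rho^n_p$ completes the estimate.

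The main obstacle will be the cross term $d(D_\tau e^{n+1}_u,e^{n+1}_p)$: under only the generic continuity bounds on $d$ one cannot absorb it into the coercive quantities, and a sharp balancing against $c_a c_c$ via \Cref{ass:weakCouplingTwo} is required. A secondary technical point is handling the $D_\tau$ applied to the elliptic error equation while keeping the consistency terms on the right at order $\tau$ (rather than $1$), which relies on the smoothness of $\bar u,\bar p$ furnished by \Cref{prop:delayTwoField} and the regularity hypotheses of this proposition.
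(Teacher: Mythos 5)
Your overall strategy is the right one and coincides with the paper's: split the error via the Ritz projections \eqref{eqn:projTwoFieldU}--\eqref{eqn:projTwoFieldP}, derive discrete error equations for $\eta_u^n=\Ru\bar u^n-u_h^n$, $\eta_p^n=\Rp\bar p^n-p_h^n$, run a discrete energy argument with the weak coupling condition used for absorption, and close with discrete Gr\"onwall and the triangle inequality. However, the specific energy combination you describe does not close. The paper tests the \emph{undifferentiated} elliptic error equation with the increment $v_h=\eta_u^{n+1}-\eta_u^n$ and the parabolic one with $q_h=\eta_p^{n+1}$; via \Cref{lem:symBilinear} this simultaneously telescopes $\|\eta_u\|_a^2$ and makes the squared increments $\|\tau D_\tau\eta_u^{n+1}\|_a^2$ and $\|\tau D_\tau\eta_p^{n+1}\|_c^2$ available on the left with coefficient one. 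The one-step lag then shows up as the increment--increment cross term $d(\tau D_\tau\eta_u^{n+1},\tau D_\tau\eta_p^{n+1})$, and \emph{this} is where $C_d^2\le c_ac_c$ is needed sharply: it must be absorbed exactly into those two increment terms, for which there is no Gr\"onwall slack. You instead apply $D_\tau$ to the elliptic error equation and test with $D_\tau e_u^{n+1}$. That produces the cross term $d(D_\tau e_u^{n+1},D_\tau e_p^{n})$ (the time difference of the \emph{delayed} pressure error), which you never mention and which cannot be absorbed: the only increment available on the left from the parabolic equation is $\|e_p^{n+1}-e_p^n\|_c^2=\tau^2\|D_\tau e_p^{n+1}\|_c^2$, short by a factor of $\tau$, and you have no independent control of $\|D_\tau e_p^n\|_{\cHQ}$ since you do not also test the parabolic equation with $D_\tau e_p^{n+1}$. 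Moreover, the term you do single out, $d(D_\tau e_u^{n+1},e_p^{n+1})$, cannot be ``strictly absorbed into the $\|e_p^{n+1}\|_c^2$ contributions on the left'' --- that quantity appears only inside a telescoping difference, so this term must go through Gr\"onwall; for it the weak coupling constant is not the binding one. In short, you have misplaced both the cancellation structure and the role of \Cref{ass:weakCouplingTwo}.

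A second concrete gap concerns the surviving spatial term from the elliptic equation. Since $\Rp$ is the $b$-Ritz projection, the term $d(v_h,\bar p^{n}-\Rp\bar p^{n})$ does not vanish, and with the increment test function a direct Young estimate leaves an $O(h^2)$ remainder \emph{per time step without a factor $\tau$}, which sums to $O(h^2/\tau)$ over $n\sim T/\tau$ steps. The paper removes this by the discrete summation-by-parts identity
\begin{equation*}
d(\tau D_\tau\eta_u^{n+1},\bar p^{n}-\Rp\bar p^{n})
= d(\eta_u^{n+1},\bar p^{n}-\Rp\bar p^{n})-d(\eta_u^{n},\bar p^{n-1}-\Rp\bar p^{n-1})
-d\bigl(\eta_u^{n},(\bar p^{n}-\bar p^{n-1})-\Rp(\bar p^{n}-\bar p^{n-1})\bigr),
\end{equation*}
so that the first two terms telescope and the last carries the extra factor $\tau$ through $\bar p^{n}-\bar p^{n-1}=O(\tau)$ (this is also why the remark following the proposition mentions the alternative $h^4\tau^{-1}$ bound). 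Your proposal does not address this balancing at all. Finally, a minor point: the consistency terms $\theta_u^{n+1},\theta_p^{n+1}$ must be estimated in the weaker norms $\cHV$, $\cHQ$ and paired with $\|\eta_p^{n+1}\|_b$ using the second continuity bound $d(u,p)\le\tilde C_d\|u\|_{\cHV}\|p\|_\Q$ to gain the needed $1/\tau$ weight; your sketch leaves this implicit.
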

Before we prove~\Cref{prop:discErrorTwoField}, we state the following useful lemma, which is easily proven by straight-forward calculations.
\begin{lemma}
\label{lem:symBilinear}
For a symmetric bilinear form $\mathfrak{a}$ it holds that
\begin{align*}
	2 \mathfrak{a}(u, u-v)
	= \Vert u \Vert^2_\mathfrak{a} - \Vert v \Vert^2_\mathfrak{a} + \Vert u- v \Vert^2_\mathfrak{a}
\end{align*}
with $\Vert \cdot \Vert_\mathfrak{a}^2 := \mathfrak{a}(\,\cdot\,,\cdot\,)$.
\end{lemma}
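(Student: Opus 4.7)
The plan is to prove the identity by direct algebraic manipulation, exploiting only the bilinearity and symmetry of $\mathfrak{a}$. Since this is a polarization-type identity, there is no analytical subtlety; the proof should be a short computation.

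First I would expand the right-hand side. The third term is the only nontrivial one: by bilinearity,
\begin{align*}
  \Vert u - v \Vert_\mathfrak{a}^2
  = \mathfrak{a}(u-v, u-v)
  = \mathfrak{a}(u,u) - \mathfrak{a}(u,v) - \mathfrak{a}(v,u) + \mathfrak{a}(v,v),
\end{align*}
and using the symmetry $\mathfrak{a}(u,v) = \mathfrak{a}(v,u)$ this simplifies to $\Vert u \Vert_\mathfrak{a}^2 - 2\mathfrak{a}(u,v) + \Vert v \Vert_\mathfrak{a}^2$. Substituting this back yields
\begin{align*}
  \Vert u \Vert_\mathfrak{a}^2 - \Vert v \Vert_\mathfrak{a}^2 + \Vert u - v \Vert_\mathfrak{a}^2
  = 2\, \Vert u \Vert_\mathfrak{a}^2 - 2\, \mathfrak{a}(u,v).
\end{align*}

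Next I would handle the left-hand side, again by bilinearity:
\begin{align*}
  2\, \mathfrak{a}(u, u-v) = 2\, \mathfrak{a}(u,u) - 2\, \mathfrak{a}(u,v) = 2\, \Vert u \Vert_\mathfrak{a}^2 - 2\, \mathfrak{a}(u,v),
\end{align*}
which matches the expression obtained for the right-hand side. This proves the claimed identity.

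There is no real obstacle here: the statement is a rephrasing of the classical polarization identity $\mathfrak{a}(u,v) = \tfrac12(\Vert u\Vert_\mathfrak{a}^2 + \Vert v\Vert_\mathfrak{a}^2 - \Vert u-v\Vert_\mathfrak{a}^2)$ and only relies on the two algebraic properties of $\mathfrak{a}$ stated in the lemma. One small point worth noting is that $\Vert \cdot \Vert_\mathfrak{a}$ is used purely as the shorthand $\mathfrak{a}(\cdot,\cdot)^{1/2}$ (or rather its square); no positivity of $\mathfrak{a}$ is required for the identity to hold, so the lemma applies equally to the forms $a$, $b$, $c$ in the paper, which are indeed symmetric.
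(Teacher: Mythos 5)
Your computation is correct and is exactly the ``straight-forward calculation'' the paper alludes to (it gives no explicit proof, only remarking that the identity is easily verified): expanding $\Vert u-v\Vert_\mathfrak{a}^2$ by bilinearity and symmetry reduces both sides to $2\,\Vert u\Vert_\mathfrak{a}^2 - 2\,\mathfrak{a}(u,v)$. Your side remark that no positivity of $\mathfrak{a}$ is needed is also accurate.
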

\begin{proof}[Proof of \Cref{prop:discErrorTwoField}] 
We follow the ideas presented in \cite{ErnM09} and introduce
\begin{displaymath}
	\eta_u^n \vcentcolon= \Ru \bar u^n - u^n_h \in V_h\qquad\text{and}\qquad
	\eta_p^n \vcentcolon= \Rp \bar  p^n - p^n_h \in Q_h, 
\end{displaymath}
where $\bar u^{n}\vcentcolon=\bar u(t_{n})$ and $\bar p^{n} \vcentcolon=\bar p(t_{n})$ are the solutions of \eqref{eqn:delay:two} and $\Ru$, $\Rp$ denote the projections defined in \eqref{eqn:projTwoFieldU} and \eqref{eqn:projTwoFieldP}, respectively. Note, however, that these projections differ from the projections used in~\cite{ErnM09}. 
Using~\eqref{eqn:discrete:a} and~\eqref{eqn:delay:two:a}, we immediately obtain
\begin{align*}
	a(\eta^{n+1}_u,v_h) - d(v_h,\eta^{n+1}_p) 
	&= a(\bar u^{n+1}-u^{n+1}_h,v_h) - d(v_h,\Rp \bar p^{n}-p^{n}_h) - d(v_h,\eta^{n+1}_p- \eta^{n}_p)\\
	&= d(v_h,\bar p^{n}- \Rp \bar p^{n}) - d(v_h,\eta^{n+1}_p- \eta^{n}_p)
\end{align*}
for all test functions $v_h\in V_h$. Similarly, we observe that
\begin{align*}
	\tau\, b(\eta^{n+1}_p,q_h) 
	&= \tau\, b(\bar p^{n+1} - p^{n+1}_h,q_h) \\
	&= -d(\tau \dot{\bar u}^{n+1},q_h) - c(\tau \dot{\bar p}^{n+1},q_h) + d(\tau D_\tau u^{n+1}_h,q_h) + c(\tau D_\tau p^{n+1}_h,q_h)
\end{align*}
for all $q_h \in Q_h$. Together with 
\begin{displaymath}
	\theta^{n+1}_u \vcentcolon= \Ru \bar u^{n+1} - \Ru \bar u^n - \tau \dot{\bar u}^{n+1} \qquad\text{and}\qquad
	\theta^{n+1}_p \vcentcolon= \Rp \bar p^{n+1} -\Rp \bar p^n -\tau \dot{\bar p}^{n+1}, 
\end{displaymath}
this implies 
\begin{align*}
	&d(\eta^{n+1}_u - \eta^{n}_u,q_h) + c(\eta^{n+1}_p - \eta^n_p,q_h) + \tau\, b(\eta^{n+1}_p,q_h) \\
	&\ =  d(\Ru \bar u^{n+1} - \Ru \bar u^n - \tau D_\tau u^{n+1}_h,q_h) + c(\Rp \bar p^{n+1} - \Rp \bar p^n - \tau D_\tau p^{n+1}_h,q_h) + \tau\, b(\eta^{n+1}_p,q_h)\\
	&\ = d(\theta^{n+1}_u,q_h) + c(\theta^{n+1}_p,q_h)
\end{align*}
for all $q_h\in \Q_h$. 
For the particular choices~$v_h = \eta^{n+1}_u - \eta^n_u$ and $q_h = \eta^{n+1}_p$, we obtain
\begin{multline*}
	a(\eta^{n+1}_u,\eta^{n+1}_u - \eta^n_u) + c(\eta^{n+1}_p-\eta^n_p,\eta^{n+1}_p) + \tau\, b(\eta^{n+1}_p,\eta^{n+1}_p)\\ 
	= d(\eta^{n+1}_u - \eta^n_u,\bar p^{n}- \Rp \bar p^{n} - \eta^{n+1}_p + \eta^{n}_p) + d(\theta^{n+1}_u,\eta^{n+1}_p) + c(\theta^{n+1}_p,\eta^{n+1}_p).
\end{multline*}
Using \Cref{lem:symBilinear} for the bilinear forms $a$ and $c$, we obtain
\begin{multline*}
	\|\eta^{n+1}_u\|^2_a - \|\eta^n_u\|^2_a + \|\tau D_\tau\eta^{n+1}_u\|^2_a + \|\eta^{n+1}_p\|^2_c - \|\eta^n_p\|^2_c + \|\tau D_\tau\eta^{n+1}_p\|^2_c + 2\tau\, \|\eta^{n+1}_p\|^2_b \\
	= 2\, d(\tau D_\tau\eta^{n+1}_u,\bar p^{n}- \Rp \bar p^{n} - \tau D_\tau\eta^{n+1}_p) + 2\, d(\theta^{n+1}_u,\eta^{n+1}_p) + 2\, c(\theta^{n+1}_p,\eta^{n+1}_p).
\end{multline*}
With the identity
\begin{multline*}
d(\tau D \eta^{n+1}_u,\bar p^{n}- \Rp\bar p^{n}) \\
= d(\eta_u^{n+1},\bar p^{n}-\Rp \bar p^n) - d(\eta_u^n,\bar p^{n-1}-\Rp \bar p^{n-1}) - d(\eta_u^n,(\bar p^n-\bar p^{n-1}) - \Rp(\bar p^n - \bar p^{n-1}))
\end{multline*}
and the Taylor expansion~$\bar p^{n} - \bar p^{n-1} = \dot{\bar p}(\xi)$, the weighted version of Young's inequality, cf.~\cite[App.~B]{Eva98}, leads to
\begin{multline*}
2\, d(\tau D \eta^{n+1}_u,\bar p^{n}- \Rp\bar p^{n} - \tau D\eta^{n+1}_{p}) \\
\begin{aligned}
&\leq \tfrac{C_{d}^2}{c_a\,c_c}\|\tau D \eta^{n+1}_u\|_a^2 + \|\tau D\eta^{n+1}_{p}\|_c^2 + \tau \|\eta_u^n\|_a^2 + \tau \tfrac{C_{d}^2}{c_a} \|\dot{\bar p} - \Rp \dot{\bar p}\|_{L^\infty(\cHQ)}^2 \\
&\phantom{\leq}\quad + 2\, d(\eta_u^{n+1},\bar p^{n}-\Rp \bar p^{n}) 
- 2\, d(\eta_u^{n},\bar p^{n-1}-\Rp \bar p^{n-1}).
\end{aligned}
\end{multline*}
Similarly, we obtain for the two other terms 
\begin{align*}
	2\, d(\theta^{n+1}_u,\eta^{n+1}_p) 
	\leq \tfrac{2\, \tilde C_d}{\sqrt{c_b}}\|\theta^{n+1}_u\|_{\cHV} \,\|\eta^{n+1}_p\|_{b}
	\leq \tfrac{\tilde C_d^2}{c_b}\tfrac{2}{\tau}\|\theta^{n+1}_u\|_{\cHV}^2 + \tfrac{\tau}{2} \|\eta^{n+1}_p\|^2_{b}
\end{align*}
and with the continuity constant~$\CQtoH$ of the embedding~$\Q\hook \cHQ$, 
\begin{align*}
	2\, c(\theta^{n+1}_p,\eta^{n+1}_p) &\leq 2\,C_c \|\theta^{n+1}_p\|_\cHQ \, \|\eta^{n+1}_p\|_\cHQ \leq \tfrac{C_c^2\, \CQtoHsquare}{c_b}\tfrac{2}{\tau} \|\theta^{n+1}_p\|^2_\cHQ + \tfrac{\tau}{2} \|\eta^{n+1}_p\|^2_b. 
\end{align*}
Next, we combine the previous estimates and absorb the terms $\|\tau D_\tau\eta^{n+1}_p\|_c^2$, $ \tau \|\eta^{n+1}_p\|_b$, and $\|\tau D_\tau\eta^{n+1}_u\|^2_a$ using \Cref{ass:weakCouplingTwo} for the latter. 
Invoking~\Cref{ass:projErrorTwoField} then yields
\begin{multline}
\label{eqn:estimateEtaAtN}
	\|\eta^{n+1}_u\|^2_a - (1+\tau) \|\eta^n_u\|^2_a + \|\eta^{n+1}_p\|^2_c - \|\eta^n_p\|^2_c + \tau\, \| \eta^{n+1}_p\|^2_b \\
	-2\, d(\eta_u^{n+1},\bar p^{n}-\Rp \bar p^{n}) + 2\, d(\eta_u^{n},\bar p^{n-1}-\Rp \bar p^{n-1})\\
	\begin{aligned}
	&\lesssim \tau h^2 \|\dot{\bar p}\|_{L^\infty(\Q)}^2 + \tfrac 1\tau \|\theta^{n+1}_u\|^2_\cHV + \tfrac1\tau \|\theta^{n+1}_p\|^2_\cHQ.
	\end{aligned}
\end{multline}
To estimate~$\theta^{n+1}_u$ and~$\theta^{n+1}_p$, we observe 
\begin{align*}
	\theta^{n+1}_u 
	&= \int_{t_n}^{t_{n+1}} \Ru \dot{\bar u}(s) \ds - \Big( (s-t_n)\dot{\bar u}(s)\Big|_{t_n}^{t_{n+1}} - \int_{t_n}^{t_{n+1}} \dot{\bar u}(s)\ds\Big) - \int_{t_n}^{t_{n+1}} \dot{\bar u}(s)\ds\\
	&= -\int_{t_n}^{t_{n+1}} \dot{\bar u}(s)-\Ru\dot{\bar u}(s)\ds - \int_{t_n}^{t_{n+1}} (s-t_n)\, \ddot{\bar u}(s)\ds
\end{align*}
and thus, by~\Cref{ass:projErrorTwoField}, 
\begin{align*}
	\|\theta^{n+1}_u\|_{\cHV} 
	\leq \int_{t_n}^{t_{n+1}} \|\dot{\bar u}-\Ru \dot{\bar u}\|_{\cHV} \ds + \tau^2\|\ddot{\bar u}\|_{L^\infty(t_n,t_{n+1};\cHV)}
	\lesssim \tau h\, \|\dot{\bar u}\|_{L^\infty(\V)} + \tau^2\|\ddot{\bar u}\|_{L^\infty(\cHV)}.
\end{align*}
Note that the regularity of the history function~$\Phi$ and ${\bar p}\in W^{2,\infty}(\cHQ)$ imply~${\bar u}\in W^{2,\infty}(\V)$ by~\eqref{eqn:delay:two:a}.
In the same manner, we obtain for~$\theta^{n+1}_p$ the estimate 
\begin{equation*}
\|\theta^{n+1}_p\|_\cHQ 
\lesssim \tau h\, \|\dot{\bar p}\|_{L^\infty(\Q)} + \tau^2\|\ddot{\bar p}\|_{L^\infty(\cHQ\!)}.
\end{equation*} 
Taking the sum over $n$ in~\eqref{eqn:estimateEtaAtN}, we finally obtain 
\begin{align*}
	\|\eta^{n}_u &\|^2_a + \|\eta^{n}_p  \|^2_c 
	+ \sum_{k=1}^{n} \tau\, \|\eta^{k}_p\|^2_b\\
%
%
	&\lesssim \mathrm{e}^{t_n}\Big[h^2  
	+ t_{n}\, \Big( h^2 \|\dot{\bar p}\|^2_{L^\infty(\Q)}
 	+ h^2\, \|\dot{\bar u}\|^2_{L^\infty(\V)} + \tau^2\, \|\ddot{\bar u}\|^2_{L^\infty(\cHV)} \\
	&\hspace{5.5cm} + h^2\, \|\dot{\bar p}\|^2_{L^\infty(\Q)} + \tau^2\, \|\ddot{\bar p}\|^2_{L^\infty(\cHQ\!)} \Big) + h^2 \|{\bar p}\|^2_{L^\infty(\Q)}\Big] \\
	&\lesssim \mathrm{e}^{t_n}(1+t_n)\, (h^2+\tau^2). 
\end{align*}
Note that the exponential factor appears due to the `perturbed' telescope sum in~\eqref{eqn:estimateEtaAtN} and the application of a discrete Grönwall inequality. 
Finally, using the assumed regularity and~\Cref{ass:projErrorTwoField}, i.e., 
\[
\|\bar u^{n} - \Ru \bar u^{n}\|_\V 
\lesssim h\, \|\nabla^2 \bar u^n\|_{\cHV}, \
\|\bar p^{n} - \Rp \bar p^{n} \|_\cHQ\! 
\lesssim h\, \|\bar p^n\|_{\Q}, \
\|\bar p^{n} - \Rp \bar p^{n}\|_\Q
\lesssim h\, \|\nabla^2\bar p^n\|_{\cHQ\!},
\]
the assertion follows by the triangle inequality.
\end{proof}
\begin{remark}
With the same assumptions as in \Cref{prop:discErrorTwoField} and a slightly stronger coupling condition (namely with a factor~$1+\eps$), one can also show that the considered error is bounded by a constant times~$t_{n}\, (h^2+\tau^2 + h^4\tau^{-1})$. 
Thus, the exponential term can be exchanged by a higher-order term, which includes the step size~$\tau$ in the denominator. This, however, is not critical in the range of interest with~$\tau \approx h$.
\end{remark}
%
%
\subsection{Convergence of the semi-explicit scheme}
We close this section with a summary of the previous results, which states that the semi-explicit scheme~\eqref{eqn:discrete} converges with order~$h+\tau$ if the finite element spaces are chosen appropriately and the weak coupling condition is satisfied. 
\begin{theorem}[Convergence, two-field model]
\label{thm:twoFieldConvergence}
Suppose that \Cref{ass:weakCouplingTwo,ass:projErrorTwoField} hold. Further, let the right-hand sides~$f\colon [0,T] \to \cHV$ and~$g\colon [0,T] \to \cHQ$ be sufficiently smooth.  
Then, with~$(u, p)$ being the solution of the original system~\eqref{eqn:two} and $u^n_h\in V_h$, $p^n_h\in Q_h$ the fully discrete approximations obtained by~\eqref{eqn:discrete} for $n \le T/\tau$ and initial data~$u_h^0\in V_h$, $p_h^0\in Q_h$ with
\[
\|\Ru u^0 - u_h^0\|_\V + \|\Rp p^0 - p^0_h\|_\cHQ 
\lesssim\, h,
\]
we obtain the error estimate 
\begin{equation*}
\|u(t_n) - u_h^n\|^2_\V + \|p(t_n) - p_h^n\|^2_\cHQ + \sum_{k=1}^n\tau\, \|p(t_k) - p_h^k\|^2_\Q \ \lesssim\ \mathrm{e}^{t_n} (1+t_n)\, ( h^2 + \tau^2).
\end{equation*}
\end{theorem}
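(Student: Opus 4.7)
The plan is to use the triangle inequality via the intermediate solution $(\bar u, \bar p)$ of the delay system~\eqref{eqn:delay:two}. This combines \Cref{prop:delayTwoField}, which bounds $(u,p) - (\bar u, \bar p)$ by a multiple of $\tau$, with \Cref{prop:discErrorTwoField}, which bounds $(\bar u(t_n), \bar p(t_n)) - (u_h^n, p_h^n)$ by a multiple of $h + \tau$. Both are designed exactly so that they compose.

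First I would fix a smooth history function $\Phi \in C^\infty([-\tau,0],\Q)$ satisfying $\Phi(-\tau) = \Phi(0) = p^0$, as prescribed by~\eqref{eqn:historyTwo}; this makes $(\bar u, \bar p)$ consistent with the initial data of the original PDAE and supplies the boundary values for $\ddot{\Phi}$ entering the bounds of \Cref{prop:delayTwoField}. Under the smoothness assumptions on $f$ and $g$, the regularity demanded by both propositions, namely $\bar p \in W^{2,\infty}(\cHQ\!)$ as well as $\nabla^2 \bar u \in L^\infty(\cHV)$ and $\nabla^2 \bar p \in L^\infty(\cHQ\!)$, all with bounds independent of $\tau$, follows from the analysis of the neutral delay PDAE in \Cref{sec:neutralDelayPDE}. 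This is the delicate point of the argument, since neutral systems do not smooth and the relevant constants could in principle blow up as $\tau \to 0$; establishing $\tau$-uniformity is the main obstacle and is precisely where the weak coupling condition \Cref{ass:weakCouplingTwo} enters, in the spirit of the necessary spectral radius condition in \Cref{ex:spectralRadius}.

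Next I would invoke \Cref{prop:delayTwoField} at $t = t_n$ to obtain
\begin{align*}
\|\bar u(t_n) - u(t_n)\|_\V^2 + \|\bar p(t_n) - p(t_n)\|_\Q^2 \lesssim \tau^2\,(1+t_n),
\end{align*}
and sum the $\Q$-estimate against the quadrature weight $\tau$ to get
\begin{align*}
\sum_{k=1}^{n} \tau\,\|\bar p(t_k) - p(t_k)\|_\Q^2 \ \lesssim\ \tau^2\, t_n\,(1+t_n).
\end{align*}
Since the Gelfand triple embedding $\Q \hookrightarrow \cHQ$ is continuous, this also controls the $\cHQ$-norm of $\bar p - p$ by $\tau$.

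Finally, the triangle inequality $\|u(t_n) - u_h^n\|_\V \le \|u(t_n) - \bar u(t_n)\|_\V + \|\bar u(t_n) - u_h^n\|_\V$ together with $(a+b)^2 \le 2a^2 + 2b^2$, and the analogous decomposition for $p$ in both the $\cHQ$-norm and the summed $\Q$-norm, combines the two estimates. Since the $\tau$-only terms from \Cref{prop:delayTwoField} are dominated by the $\mathrm{e}^{t_n}(1+t_n)(h^2+\tau^2)$ bound from \Cref{prop:discErrorTwoField}, all three terms in the claimed inequality are controlled, giving the theorem. Apart from the regularity/uniformity input mentioned above, nothing beyond routine manipulation is required.
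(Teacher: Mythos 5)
Your proposal is correct and follows essentially the same route as the paper: define the history function as in~\eqref{eqn:historyTwo}, obtain the required $\tau$-uniform regularity of $(\bar u,\bar p)$ from the neutral-delay analysis of \Cref{sec:neutralDelayPDE} together with the assumed $H^2$-regularity, and then combine \Cref{prop:delayTwoField} and \Cref{prop:discErrorTwoField} by the triangle inequality. One small clarification: for finite $T$ the appendix stability bound holds without \Cref{ass:weakCouplingTwo} (the weak coupling is needed for $T$-independent bounds and enters the convergence argument chiefly through the absorption step in the proof of \Cref{prop:discErrorTwoField}), but this does not affect the validity of your argument.
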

\begin{proof}	
If we define the history function~$\Phi$ as in~\eqref{eqn:historyTwo}, then the assumptions on the data imply that the solution of the related delay system~\eqref{eqn:delay:two} stays bounded in the sense of~${\bar p}\in W^{2,\infty}(\cHQ\!)$, cf.~\Cref{sec:neutralDelayPDE}. 
Further, the assumed~$H^2$-regularity of the bilinear forms~$a$ and~$b$ yields that~$\nabla^2 \bar u$ and~$\nabla^2 \bar p$ are bounded as well, i.e., $\nabla^2 \bar u \in L^\infty(\cHV)$ and~$\nabla^2 \bar p\in L^\infty(\cHQ\!)$. 
Thus, all assumptions of~\Cref{prop:delayTwoField,prop:discErrorTwoField} are satisfied and the stated estimate directly follows from the previous results and the triangle inequality. \Cref{prop:delayTwoField} shows that the continuous solutions~$(u, p)$ and~$(\bar u, \bar p)$ are close, whereas~\Cref{prop:discErrorTwoField} shows that the fully discrete solution approximates the solution of the delay system with the given order. 
\end{proof}
%
%
\section{Semi-Explicit Discretization of the Network Model}\label{sec:semiNetwork}
Similar to the two-field model discussed in~\Cref{sec:semiTwoField}, we now consider a semi-explicit time discretization of the multiple-network system~\eqref{eqn:network}. Note that this includes the three-field formulation as a special case for~$\m=1$. 
For the network model, the combination of semi-explicit time discretization and conforming spatial discretization leads to 
\begin{subequations}
\label{eqn:discrete:network}
\begin{align}
	a(u^{n+1}_h,v_h) - \sum_{i=1}^\m d_i(v_h, p^n_{i,h}) 
	&= \langle f^{n+1}, v_h \rangle, \label{eqn:discrete:network:a} \\
	(y^{n+1}_{i,h},z_h)_{\cHZ} - \dd_i(z_h,p^{n+1}_{i,h}) 
	&= 0, \label{eqn:discrete:network:b} \\
	d_i(D_\tau u^{n+1}_h, q_h) + c(D_\tau p^{n+1}_{i,h},q_h) + \dd_i(y^{n+1}_{i,h},q_h) - \sum_{j\neq i} \beta_{ij} (p^{n+1}_{i,h}-p^{n+1}_{j,h}, q_h)_\Q 
	&= \langle g^{n+1}_i, q_h\rangle \label{eqn:discrete:network:c} 
\end{align}
\end{subequations}
for all test functions~$v_h\in V_h$, $z_h\in Z_h$, $q_h \in Q_h$ and~$i=1,\dots,\m$. 
As before, we consider a partition of~$[0,T]$ with time points~$t_n=n\,\tau$, conforming finite element spaces~$V_h \subseteq \V$, $Z_h \subseteq \cZ$, $Q_h\subseteq \Q$, and $u^{n}_h$, $y^{n}_{i,h}$, $p^{n}_{i,h}$ denote the fully-discrete approximations at time~$t_{n}$. 

Throughout this section, we consider the weak coupling condition from~\Cref{ass:weakCouplingNetwork} as well as the small exchange condition from~\Cref{ass:beta}.
%
\subsection{A related network model with delay}
We insert a delay term to system~\eqref{eqn:network}, i.e., we consider the solution~$(\bar u, \bar y_i, \bar p_i)$ to
\begin{subequations}
\label{eqn:delay:network}
\begin{align}
	a(\bar u,v) - \sum_{i=1}^\m d_i(v, \bar p_i(\,\cdot-\tau)) 
	&= \langle f, v \rangle, \label{eqn:delay:network:a} \\
	(\bar y_i,z)_{\cHZ} - \dd_i(z,\bar p_i) 
	&= 0, \label{eqn:delay:network:b} \\
	d_i(\dot {\bar u}, q) + c(\dot {\bar p}_i,q) + \dd_i(\bar y_i,q) - \sum_{j\neq i} \beta_{ij} (\bar p_i-\bar p_j, q)_\Q 
	&= \langle g_i, q\rangle \label{eqn:delay:network:c} 
\end{align}
\end{subequations}
for~$i=1,\dots,\m$ and all test functions~$v\in \V$, $z\in \cZ$, and $q \in \Q$. 
Here, we need~$\m$ history functions for~$\bar p_i|_{[-\tau, 0]}(t) = \Phi_i(t)$ and set  
\begin{align}
\label{eqn:historyNetwork}
\Phi_i(-\tau) = \Phi_i(0) = p_i^0, \qquad  
\Phi_i \in C^\infty([-\tau, 0], \Q).
\end{align}
This then implies~$\bar p_i(0) = p_i(0)$ and~$\bar u(0) = u(0)$. 
As for the two-field formulation, we compare the solutions of the original and the delay system. 
\begin{proposition}
\label{prop:delayNetwork}
Assume sufficiently smooth right-hand sides~$f$, $g_i$ and history functions~$\Phi_i$ as defined in~\eqref{eqn:historyNetwork} such that the solution of~\eqref{eqn:delay:network} satisfies~${\bar p}_i\in W^{2,\infty}(\Q)$ for all~$i=1,\dots, \m$. 
Then, the difference of the solutions to~\eqref{eqn:network} and~\eqref{eqn:delay:network} satisfy the estimate
\begin{align*}
\Vert \bar u(t) - u(t) \Vert_\V^2
+ \sum_{i=1}^\m \Vert\bar p_i(t) - p_i(t) \Vert_\Q^2 
+ \sum_{i=1}^\m \Vert \bar y_i - y_i\Vert^2_{L^2(\cHZ)} 
\lesssim \tau^2\, \m\, \big( 1 + \mathrm{e}^{C (1 +4 \m^2\beta^2)\, t} \big)  \bar P
\end{align*}
with a constant~$C$ independent of~$\tau$ and 
\[
  \bar P
  := \sum_{i=1}^\m \Vert \dot{\bar p}_i \Vert_{L^2(\Q)}^2 
  + (1 + \tau^2)\, T\, \sum_{i=1}^\m \Big( \Vert \ddot{\Phi}_i\Vert_{L^\infty(-\tau,0;\Q)}^2 + \Vert \ddot{\bar p}_i\Vert_{L^\infty(\Q)}^2 \Big).
\]
%
\end{proposition}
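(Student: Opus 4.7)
The proof follows the structure of \Cref{prop:delayTwoField} adapted to the multiple-network setting. Introduce the error quantities $e_u \vcentcolon= \bar u - u$, $e_{y_i} \vcentcolon= \bar y_i - y_i$, and $e_{p_i} \vcentcolon= \bar p_i - p_i$, and subtract \eqref{eqn:network} from \eqref{eqn:delay:network}. The Taylor expansions
\begin{align*}
  \bar p_i(t-\tau)
  &= \bar p_i(t) - \tau\, \dot{\bar p}_i(t) + \tfrac12 \tau^2 \ddot{\bar p}_i(\zeta_{i,t}), \\
  \dot{\bar p}_i(t-\tau)
  &= \dot{\bar p}_i(t) - \tau\, \ddot{\bar p}_i(\xi_{i,t})
\end{align*}
replace the delay terms by $\sum_i d_i(v, e_{p_i})$ modulo $\tau$-residuals, and the choice of histories~\eqref{eqn:historyNetwork} ensures that all error variables vanish at $t=0$. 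The intermediate flux $e_{y_i}$ is controlled via~\eqref{eqn:delay:network:b} minus~\eqref{eqn:network:b}, which reads $(e_{y_i}, z)_\cHZ = \dd_i(z, e_{p_i})$ for all $z\in\cZ$. Testing the time derivative of this identity with $z = e_{y_i}$ yields $\tfrac12 \tddt \Vert e_{y_i}\Vert_\cHZ^2 = \dd_i(e_{y_i}, \dot e_{p_i})$.

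For the first (dissipative) energy estimate, differentiate the $e_u$-equation in time, test with $v=\dot e_u$, sum the $e_{p_i}$-equation over $i$ with $q=\dot e_{p_i}$, and add. The coupling terms $\sum_i d_i(\dot e_u, \dot e_{p_i})$ cancel, and the $y$-identity converts the $\dd_i(e_{y_i}, \dot e_{p_i})$ contributions into $\tfrac12 \tddt \sum_i \Vert e_{y_i}\Vert_\cHZ^2$, leaving
\[
  \Vert \dot e_u\Vert_a^2 + \sum_{i=1}^\m \Vert \dot e_{p_i}\Vert_c^2 + \tfrac12 \tddt \sum_{i=1}^\m \Vert e_{y_i}\Vert_\cHZ^2
  = \sum_{i=1}^\m \sum_{j\neq i} \beta_{ij}(e_{p_i} - e_{p_j}, \dot e_{p_i})_\Q - \tau \sum_{i=1}^\m d_i(\dot e_u, \ddot{\bar p}_i(\xi_{i,t})).
\]
The $\tau$-residual is absorbed using Cauchy--Schwarz in $i$ together with \Cref{ass:weakCouplingNetwork} (which provides $\sum_i C_{d_i}^2 \le c_a c_c$), while the $\beta_{ij}$-sum is handled by a weighted Young inequality. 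Since $\Q=\cHQ$ in the three-field/network setting, \Cref{ass:beta} permits absorption of the $\Vert \dot e_{p_i}\Vert_\Q^2$-part into $\Vert \dot e_{p_i}\Vert_c^2$, while the remainder is bounded by $\tfrac{4(\m-1)^2\beta^2}{c_c}\sum_i \Vert e_{p_i}\Vert_\Q^2$. Integration over $[0,t]$ then controls $\int_0^t \Vert \dot e_u\Vert_\V^2$, $\sum_i \Vert e_{y_i}(t)\Vert_\cHZ^2$, and $\int_0^t\sum_i \Vert \dot e_{p_i}\Vert_\Q^2$ in terms of $\tau^2$-data plus $\tfrac{4\m^2\beta^2}{c_c}\int_0^t\sum_i \Vert e_{p_i}\Vert_\Q^2$.

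A second estimate provides the missing pointwise control: test the undifferentiated $e_u$-equation with $v=\dot e_u$, sum the $e_{p_i}$-equation over $i$ with $q=e_{p_i}$, and add. This yields $\tfrac12 \tddt (\Vert e_u\Vert_a^2 + \sum_i \Vert e_{p_i}\Vert_c^2) + \sum_i \Vert e_{y_i}\Vert_\cHZ^2$ on the left. The $\beta_{ij}$-terms are bounded by $2\beta(\m-1)\sum_i \Vert e_{p_i}\Vert_\Q^2 \le \tfrac{c_c}{3}\sum_i \Vert e_{p_i}\Vert_\Q^2$ via \Cref{ass:beta} and absorbed. The $\tau$-residual from the Taylor expansion is handled by integration by parts in time (as in \Cref{prop:delayTwoField}), producing a boundary contribution controlled by $\tfrac12 \Vert e_u(t)\Vert_a^2$ and a volume term dominated by $\int_0^t \Vert e_u\Vert_a^2$ plus a $\tau^2$-data contribution of size $\sum_i \Vert \dot{\bar p}_i\Vert_{L^2(\Q)}^2$. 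Combining both estimates, the resulting coupled differential inequality in $\Vert e_u(t)\Vert_\V^2 + \sum_i \Vert e_{p_i}(t)\Vert_\Q^2$ carries a linear term with prefactor of order $C(1 + 4\m^2\beta^2)$, so Grönwall's lemma delivers the exponential factor $\mathrm{e}^{C(1+4\m^2\beta^2)\,t}$; the overall factor $\m$ tracks the sums of $\tau^2$-residuals over the $\m$ networks.

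The main obstacle is the bookkeeping of the Young weights: every $\tau$-residual and every $\beta_{ij}$-term must be split so that the sums over $i$ and $j\neq i$ can be absorbed using the precise constants of \Cref{ass:weakCouplingNetwork,ass:beta}. In particular, the factor $4\m^2\beta^2$ emerges from the identity $\sum_i\sum_{j\neq i}(\Vert e_{p_i}\Vert_\Q^2 + \Vert e_{p_j}\Vert_\Q^2) = 2(\m-1)\sum_i \Vert e_{p_i}\Vert_\Q^2$, combined with a balanced choice of the $(\m-1)$-weights in Young's inequality; a cruder splitting would only yield $\m^3\beta^2$ or worse in the exponent.
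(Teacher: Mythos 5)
Your proposal is correct and follows essentially the same route as the paper: the same error system obtained via Taylor expansion of the delay terms, the same two energy estimates (the differentiated equations tested with $\dot e_u$, $\dot e_{p_i}$ and the flux identity tested with $e_{y_i}$, then the undifferentiated ones tested with $\dot e_u$, $e_{p_i}$), and the same Gr\"onwall step producing the factor $4\m^2\beta^2$ from $\sum_i\sum_{j\neq i}\Vert e_{p_i}-e_{p_j}\Vert_\Q^2 \le 4(\m-1)\sum_i\Vert e_{p_i}\Vert_\Q^2$. The only deviation is cosmetic: for the residual $-\tau\sum_i d_i(\dot e_u,\dot{\bar p}_i)$ in the second estimate you integrate by parts in time, whereas the paper (both here and in \Cref{prop:delayTwoField}) simply applies Young's inequality and reuses the bound on $\int_0^t\Vert\dot e_u\Vert_\V^2$ from the first estimate; both yield the same $\tau^2$-contribution in $\bar P$.
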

\begin{proof}
We introduce the error terms~$e_u := \bar u - u$, $e_{y_i} := \bar y_i - y_i$, $e_{p_i} := \bar p_i - p_i$ and note that~$e_u(0)=0$ and~$e_{p_i}(0)=0$ by construction of the delay system~\eqref{eqn:delay:network}. 
Using a Taylor expansion as in~\eqref{eqn:taylor} for each~$p_i$ with some~$\zeta_{i,t}, \xi_{i,t} \in (t-\tau, t) \subseteq (-\tau, T]$, we obtain the system 
\begin{subequations}
\label{eqn:lemDelayNetwork}
\begin{align}
	a(e_u,v) - \sum_{i=1}^\m d_i(v, e_{p_i}) 
	&= - \tau\, \sum_{i=1}^\m d_i(v, \dot{\bar p}_i) + \tfrac{1}{2}\,\tau^2 \sum_{i=1}^\m d_i(v, \ddot{\bar p}_i(\zeta_{i,t})), \label{eqn:lemDelayNetwork:a} \\
	(e_{y_i},z)_{\cHZ} - \dd_i(z,e_{p_i}) 
	&= 0, \label{eqn:lemDelayNetwork:b} \\
	d_i(\dot e_u, q) + c(\dot e_{p_i},q) + \dd_i(e_{y_i},q) 
	&= \sum_{j\neq i} \beta_{ij} (e_{p_i} - e_{p_j}, q)_{\Q} \label{eqn:lemDelayNetwork:c}
\end{align}
\end{subequations}
for test functions~$v\in\V$, $z\in\cZ$, and~$q\in\Q$. 
From equation~\eqref{eqn:lemDelayNetwork:b} and~$e_{p_i}(0)=0$ we conclude that also~$e_{y_i}(0)=0$ for all~$i=1,\dots,\m$. 
Considering the derivatives of the first two equations, we get 
\begin{subequations}
\begin{eqnarray}
	a(\dot e_u,v) - \sum_{i=1}^\m d_i(v, \dot e_{p_i}) &=& -\tau\, \sum_{i=1}^\m d_i(v, \ddot{\bar p}_i(\xi_{i,t})), \label{eqn:lemDelayNetwork:derivative:a} \\
	(\dot e_{y_i},z)_{\cHZ} - \dd_i(z,\dot e_{p_i}) &=& 0.  \label{eqn:lemDelayNetwork:derivative:b} 
\end{eqnarray}
\end{subequations} 
The sum of~\eqref{eqn:lemDelayNetwork:derivative:a} with~$v=\dot e_u$, \eqref{eqn:lemDelayNetwork:derivative:b} with~$z=e_{y_i}$, and~\eqref{eqn:lemDelayNetwork:c} with~$q=\dot e_{p_i}$ for all~$i=1,\dots, \m$ leads to 
\begin{align*}
  \Vert \dot e_u \Vert_\V^2 
  + \sum_{i=1}^\m \Vert \dot e_{p_i}\Vert_\Q^2 
  + \tddt \sum_{i=1}^\m \Vert e_{y_i}\Vert^2_{\cHZ} 
  \lesssim \m\, \beta^2 \sum_{i=1}^\m \sum_{j\neq i} \Vert e_{p_i} - e_{p_j}\Vert_\Q^2
  + \m\,\tau^2 \sum_{i=1}^\m \Vert \ddot{\bar p}_i(\xi_{i,t}) \Vert_\Q^2.
\end{align*} 
Integration over~$[0,t]$ then yields 
\begin{align*}
  \int_0^t \Vert \dot e_u(s) \Vert_\V^2  \ds 
  &\lesssim \m\,\beta^2 \sum_{i=1}^\m \sum_{j\neq i} \int_0^t \Vert e_{p_i}(s) - e_{p_j}(s)\Vert_\Q^2 \ds
  + \m\,\tau^2\,t\, \sum_{i=1}^\m \Vert \ddot{\bar p}_i\Vert_{L^\infty(-\tau,t;\Q)}^2.
\end{align*} 
On the other hand, the sum of~\eqref{eqn:lemDelayNetwork:a} with~$v=\dot e_u$, \eqref{eqn:lemDelayNetwork:b} with~$z=e_{y_i}$, and~\eqref{eqn:lemDelayNetwork:c} with~$q= e_{p_i}$ for all~$i=1,\dots, \m$ gives 
\begin{align*}
  &\tfrac 12 \tddt \Vert e_u \Vert_\V^2
   + \tfrac 12 \tddt \sum_{i=1}^\m \Vert e_{p_i} \Vert_\Q^2  
  + \sum_{i=1}^\m \Vert e_{y_i} \Vert^2_{\cHZ} \\
  &\,\lesssim \m\,\beta^2 \sum_{i=1}^\m\sum_{j\neq i} \Vert e_{p_i} - e_{p_j}\Vert^2_\Q 
  + \sum_{i=1}^\m \Vert e_{p_i} \Vert^2_\Q
  + \Vert \dot e_u\Vert_\V^2 + \m\,\tau^2 \sum_{i=1}^\m \Vert \dot{\bar p}_i \Vert_\Q^2 
  + \m\,\tau^4 \sum_{i=1}^\m \Vert \ddot{\bar p}_i(\zeta_{i,t}) \Vert_\Q^2.
\end{align*}
We integrate again over~$[0,t]$ and use the previous estimate of the integral of~$\Vert \dot e_u\Vert_\V^2$. With the triangle inequality applied to~$\Vert e_{p_i}(s)-e_{p_j}(s) \Vert^2_\Q$ this then yields  
\begin{align*}
  &\Vert e_u(t) \Vert_\V^2
  + \sum_{i=1}^\m \Vert e_{p_i}(t) \Vert_\Q^2
  + \sum_{i=1}^\m \int_0^t \Vert e_{y_i}(s) \Vert^2_{\cHZ} \ds \\
  &\,\lesssim (1 +4 \m^2\beta^2 ) \sum_{i=1}^\m \int_0^t \Vert e_{p_i}(s) \Vert^2_\Q \ds
  + \m\,\tau^2 \sum_{i=1}^\m \Vert \dot{\bar p}_i \Vert_{L^2(0,t;\Q)}^2 
  + \m\,(\tau^2 + \tau^4)\, t\, \sum_{i=1}^\m \Vert \ddot{\bar p}_i\Vert_{L^\infty(-\tau,t;\Q)}^2.
\end{align*}
Finally, an application of Grönwall's inequality provides 
\begin{align*}
  \sum_{i=1}^\m \Vert e_{p_i}(t) \Vert_\Q^2 
  \le C\,\m\, \mathrm{e}^{C (1 +4 \m^2\beta^2)\, t} \Big[ \tau^2 \sum_{i=1}^\m \Vert \dot{\bar p}_i \Vert_{L^2(0,t;\Q)}^2 
  + (\tau^2 + \tau^4)\, t\, \sum_{i=1}^\m \Vert \ddot{\bar p}_i\Vert_{L^\infty(-\tau,t;\Q)}^2 \Big],
\end{align*}
where~$C$ is the constant hidden in~$\lesssim$ of the previous estimate. Note that this constant is independent of the discretization parameter~$\tau$. 
\end{proof}
The latter result shows that the solutions of the original network model~\eqref{eqn:network} and the corresponding delay model~\eqref{eqn:delay:network} only differ by a term of order~$\tau$ as long as~$\bar P(t)$ stays bounded, i.e., as long as the delay system has a stable solution. 
Recall that this stability issue is discussed in~\Cref{sec:neutralDelayPDE} for the two-field model. In the setting of smooth data and regular solutions considered there, the two- and three-field formulation are equivalent. 
In the network case, the operators turn into operator matrices with similar properties. The only difference is that the ellipticity of the differential operator becomes a G\aa rding inequality. This, however, does not effect the stability result. 

We move on with the discretization of the delay system, which defines the semi-explicit scheme introduced in~\eqref{eqn:discrete:network}.
%
%
\subsection{Spatial projection}
Let $u\in \V$, $y_i\in\cZ$, and~$p_i\in\Q$, $i=1,\dots,\m$. Based on the network system \eqref{eqn:network} and the discrete spaces $V_h$, $Z_h$, and $Q_h$, we define two projection operators. 
As in the two-field model we define~$\Ru\colon \V\to V_h$ by 
\begin{align}
\label{eqn:projThreeFieldU}
  a(\Ru u,v_h) = a(u,v_h)  
\end{align}
for all $v_h \in V_h$. Note that this problem is uniquely solvable due to the ellipticity of $a$. 
Second, we define the coupled projection~$\Ri\colon \cZ\times \Q\to Z_h\times Q_h$ and write in short~$\Ri y_i$ and~$\Ri p_i$ for the parts of~$\Ri(y_i,p_i)$ in~$Z_h$ and~$Q_h$, respectively. Using this notation, we define
\begin{subequations}
\label{eqn:projThreeField}
\begin{align}
  (\Ri y_i,z_{h})_{\cHZ} - \dd_i(z_{h},\Ri p_i) 
  &= \phantom{- \dd_i}(y_i,z_{h})_{\cHZ} - \dd_i(z_{h}, p_i), \label{eqn:projThreeFieldY} \\
  - \dd_i(\Ri y_i,q_{h}) \hspace{2.86cm}
  &= - \dd_i(y_i,q_{h})  \label{eqn:projThreeFieldP}
\end{align}
\end{subequations}
for all $z_{h} \in Z_{h}$ and $q_{h} \in Q_{h}$. Due to the saddle point structure of~\eqref{eqn:projThreeField}, the system is uniquely solvable if the discrete inf-sup condition
\begin{equation*}
\adjustlimits\inf_{q_h \in Q_{h}}\sup_{z_h\in Z_{h}}
\frac{\dd_i(z_h,q_h)}{\|z_h\|_{\cZ} \|q_h\|_{\Q}} \geq \gamma > 0
\end{equation*}
is fulfilled for each~$i=1,\dots,\m$ and $(\,\cdot\,, \cdot\,)_{\cHZ}$ is elliptic on the kernel of~$\dd_i$, see e.g. \cite[Ch.~4.2]{BofBF13}. 
In the subsequent error analysis, we will assume that~\eqref{eqn:projThreeField} attains a unique solution and that the projections satisfy the following approximation properties. 
\begin{assumption}[Spatial projection, network case]
\label{ass:projErrorNetwork}
Consider $u \in \V$, $y_i \in \cZ$, $p_i \in \Q$ and assume that~\eqref{eqn:projThreeField} is well-posed. We assume that the projection errors satisfy 
\begin{subequations}
\label{eqn:projThreeFieldError}
\begin{align}
\|u - \Ru u\|_\V &\lesssim h\,\|\nabla^2 u\|_\cHV, \\
\|y_i - \Ri y_i\|_\cHZ &\lesssim h\,\|\nabla y_i\|_\cHZ, \\ 
\|p_i - \Ri p_i\|_\Q &\lesssim h\,(\|\nabla y_i\|_\cHZ + \|\nabla p_i\|_\Q), 
\end{align}
\end{subequations}
if the derivatives~$\nabla^2 u$, $\nabla y_i$, and~$\nabla p_i$ are bounded in~$\cHV$, $\cHZ$, and~$\Q$, respectively. 
\end{assumption}
\begin{example}
For the spaces~$\V = [H^1_0(\Omega)]^d$, $\cHV = [L^2(\Omega)]^d$, $\cZ = H_0(\ddiv, \Omega)$, $\cHZ = [L^2(\Omega)]^d$, and $\Q = L^2(\Omega)$, \Cref{ass:projErrorNetwork} is satisfied if~$V_h$ equals the $P_1$ Lagrange finite element space, $Z_{h}$ the Raviart-Thomas space $RT_0$, and~$Q_{h}$ the piecewise constant~$P_0$ space. 
For the proofs we refer to~\cite[Ch.~II.7]{Bra07} and~\cite[Th.~3.3]{Dur08}.
\end{example}
%
%
\subsection{Full discretization of the delay system}
As for the two-field model in~\Cref{sec:semiTwoField}, we now analyze the implicit time discretization of the delay PDAE~\eqref{eqn:delay:network}, since this is equal to the proposed semi-explicit scheme~\eqref{eqn:discrete:network}. 
\begin{proposition}
\label{prop:discErrorNetwork}
Suppose Assumptions~\ref{ass:weakCouplingNetwork}, \ref{ass:beta}, and~\ref{ass:projErrorNetwork} and the assumptions of~\Cref{prop:delayNetwork} hold, as well as~$\nabla^2 \bar u \in L^\infty(\cHV)$, $\nabla {\bar y}_i \in W^{1,\infty}(\cHZ)$, and $\nabla {\bar p}_i \in W^{1,\infty}(\Q)$. 
Then, taking initial data $u_h^0\in V_h$ and $p_{i,h}^0\in Q_h$ with 
\[
  \|\Ru u^0 - u_h^0\|_\V + \sum_{i=1}^\m\|\Ri p_i^0 - p^0_{i,h}\|_\Q\, 
  \lesssim\, h
\]
implies that for all $n \le T/\tau$ the solution of the fully discretized system~\eqref{eqn:discrete:network} satisfies 
\[
  \|\bar u(t_n) - u_h^n\|^2_\V 
  + \sum_{i=1}^\m \|\bar p_i(t_n) - p_{i,h}^n\|^2_\Q 
  + \sum_{k=1}^n\, \sum_{i=1}^\m \tau\, \|\bar y_i(t_k) - y_{i,h}^k\|^2_\cHZ\! 
  \lesssim e^{2 t_n} (1+t_{n})\, ( h^2 + \tau^2 ).
\]
\end{proposition}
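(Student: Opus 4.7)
The plan is to adapt the proof of \Cref{prop:discErrorTwoField} to the network setting, with two essential new ingredients: the coupled saddle-point projection $\Ri$ on $(y_i, p_i)$ and the inter-network coupling via the $\beta_{ij}$. With $\bar u^n := \bar u(t_n)$, $\bar y_i^n := \bar y_i(t_n)$, $\bar p_i^n := \bar p_i(t_n)$, I introduce the projected errors
\[
\eta_u^n := \Ru \bar u^n - u_h^n, \qquad \eta_{y_i}^n := \Ri \bar y_i^n - y_{i,h}^n, \qquad \eta_{p_i}^n := \Ri \bar p_i^n - p_{i,h}^n,
\]
together with the consistency remainders $\theta_u^{n+1} := \Ru \bar u^{n+1} - \Ru \bar u^n - \tau \dot{\bar u}^{n+1}$ and $\theta_{p_i}^{n+1} := \Ri \bar p_i^{n+1} - \Ri \bar p_i^n - \tau \dot{\bar p}_i^{n+1}$. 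Subtracting~\eqref{eqn:discrete:network} from~\eqref{eqn:delay:network} at $t_{n+1}$ and invoking the defining identities of $\Ru$ and $\Ri$ in~\eqref{eqn:projThreeFieldU}--\eqref{eqn:projThreeField}, I obtain a discrete error system in which the $y$-equation $(\eta_{y_i}^{n+1}, z_h)_{\cHZ} = \dd_i(z_h, \eta_{p_i}^{n+1})$ remains exact, the $u$-equation carries the delay remainder $\sum_i d_i(v_h, \bar p_i^n - \Ri \bar p_i^n - \eta_{p_i}^{n+1} + \eta_{p_i}^n)$, and the $p$-equation carries $d_i(\theta_u^{n+1}, q_h) + c(\theta_{p_i}^{n+1}, q_h)$.

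I then test the $u$-equation with $v_h = \eta_u^{n+1} - \eta_u^n$, the $y$-equation (scaled by $\tau$) with $z_h = \eta_{y_i}^{n+1}$, and the $p$-equation (scaled by $\tau$) with $q_h = \eta_{p_i}^{n+1}$, and sum over $i = 1, \ldots, \m$. The $y$-equation converts $\tau \dd_i(\eta_{y_i}^{n+1}, \eta_{p_i}^{n+1})$ into $\tau \|\eta_{y_i}^{n+1}\|_\cHZ^2$, and \Cref{lem:symBilinear} applied to $a$ and $c$ turns the left-hand side into
\[
\|\eta_u^{n+1}\|_a^2 - \|\eta_u^n\|_a^2 + \|\tau D_\tau \eta_u^{n+1}\|_a^2 + \sum_i \bigl( \|\eta_{p_i}^{n+1}\|_c^2 - \|\eta_{p_i}^n\|_c^2 + \|\tau D_\tau \eta_{p_i}^{n+1}\|_c^2 \bigr) + 2\tau \sum_i \|\eta_{y_i}^{n+1}\|_\cHZ^2,
\]
while the right-hand side collects (i) the mixed term $2 \sum_i d_i(\tau D_\tau \eta_u^{n+1}, \bar p_i^n - \Ri \bar p_i^n - \tau D_\tau \eta_{p_i}^{n+1})$, (ii) the consistency terms $2\sum_i d_i(\theta_u^{n+1}, \eta_{p_i}^{n+1}) + 2\sum_i c(\theta_{p_i}^{n+1}, \eta_{p_i}^{n+1})$, and (iii) the exchange contribution $2\tau \sum_i \sum_{j\neq i} \beta_{ij} (\eta_{p_i}^{n+1} - \eta_{p_j}^{n+1}, \eta_{p_i}^{n+1})_\Q$.

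Term (i) is treated exactly as in the two-field case: the discrete product-rule splits $d_i(\tau D_\tau \eta_u^{n+1}, \bar p_i^n - \Ri \bar p_i^n)$ into a telescoping part plus a $(\bar p_i^n - \bar p_i^{n-1})$-remainder, and weighted Young inequalities combined with \Cref{ass:weakCouplingNetwork} absorb $\|\tau D_\tau \eta_u^{n+1}\|_a^2$ and $\sum_i \|\tau D_\tau \eta_{p_i}^{n+1}\|_c^2$ at the cost of a $(1+\tau)$ weight on $\|\eta_u^n\|_a^2$. Term (iii) is bounded via Cauchy--Schwarz and $ab \le \tfrac12(a^2+b^2)$ by $5\tau\beta(\m-1)\sum_i \|\eta_{p_i}^{n+1}\|_\Q^2 \le \tau \sum_i \|\eta_{p_i}^{n+1}\|_c^2$, where the last step uses \Cref{ass:beta} and $\|\cdot\|_\Q^2 \le c_c^{-1}\|\cdot\|_c^2$; this contributes a further $+\tau$ perturbation on the $c$-telescope. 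The $\theta$'s are bounded by $\tau h + \tau^2$ using Taylor expansion and \Cref{ass:projErrorNetwork}, where the required regularity $\bar u \in W^{2,\infty}(\V)$ and $\bar p_i \in W^{2,\infty}(\Q)$ comes from the smooth data via~\eqref{eqn:delay:network:a}. Summing over $n$ and applying discrete Grönwall then produces the exponential factor $e^{2 t_n}$; a triangle inequality combined with \Cref{ass:projErrorNetwork} for $\bar u - \Ru \bar u$, $\bar y_i - \Ri \bar y_i$, $\bar p_i - \Ri \bar p_i$ closes the proof.

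The main obstacle is the simultaneous absorption of (i) and (iii) into the left-hand coercivity. The weak-coupling condition consumes part of the $a$- and $c$-coercivity to dominate the $\|\tau D_\tau\|$ terms, while the exchange condition consumes further $c$-coercivity to dominate the cross-network mixing. The precise constants $\sum_{i=1}^\m C_{d_i}^2 \le c_a c_c$ in \Cref{ass:weakCouplingNetwork} and $6(\m-1)\beta \le c_c$ in \Cref{ass:beta} are tuned so that both contributions leave strictly positive coercivity on the left-hand side, and together they are precisely what produces the exponent $2t_n$ in place of the $t_n$ of the two-field bound.
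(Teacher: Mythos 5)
Your proposal follows essentially the same route as the paper's proof: identical projected errors $\eta_u^n$, $\eta_{y_i}^n$, $\eta_{p_i}^n$ and consistency remainders $\theta$, the same test functions, the same use of \Cref{lem:symBilinear}, the same absorption of $\|\tau D_\tau\eta_u^{n+1}\|_a^2$ and $\|\tau D_\tau\eta_{p_i}^{n+1}\|_c^2$ via \Cref{ass:weakCouplingNetwork}, the same treatment of the exchange sum via \Cref{ass:beta}, and the same discrete Grönwall yielding $\mathrm{e}^{2t_n}$. The only slip is that the exchange contribution in the error equation involves the full errors $\bar p_i^{n+1}-p_{i,h}^{n+1}=(\bar p_i^{n+1}-\Ri\bar p_i^{n+1})+\eta_{p_i}^{n+1}$ rather than only the $\eta$'s; the additional projection-error piece contributes an extra $O(\tau\beta(\m-1)h^2)$ per step, which the paper keeps explicitly and which is absorbed harmlessly into the right-hand side.
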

\begin{proof}
We follow the same approach as in the proof of~\Cref{prop:discErrorTwoField}. 
First, we introduce 
\begin{displaymath}
	\eta_u^n \vcentcolon= \Ru \bar u^n - u^n_h \in V_h,\qquad
	\eta_{y_i}^n \vcentcolon= \Ri \bar y_i^n - y^n_{i,h} \in Z_h,\qquad 
	\eta_{p_i}^n \vcentcolon= \Ri \bar p_i^n - p^n_{i,h} \in Q_h,
\end{displaymath}
where $\bar u^{n}= \bar u(t_{n})$, $\bar y_i^{n}=\bar y_i(t_{n})$, and $\bar p_i^{n}= \bar p_i(t_{n})$ are the solutions of \eqref{eqn:delay:network} and~$\Ru$, $\Ri$ denote the projections defined in~\eqref{eqn:projThreeFieldU} and \eqref{eqn:projThreeField}, respectively. With~\eqref{eqn:discrete:network:a} and \eqref{eqn:delay:network:a}, we compute
\begin{align*}
	a(\eta^{n+1}_u,v_h) - \sum_{i=1}^\m d_i(v_h,\eta^{n+1}_{p_i}) 
	&= \sum_{i=1}^\m \Big[ d_i(v_h,\bar p_i^{n}- \Ri \bar p_i^{n}) 
	- d_i(v_h,\eta^{n+1}_{p_i}- \eta^{n}_{p_i}) \Big]
\end{align*}
for all $v_h\in V_h$. Similarly, we obtain from \eqref{eqn:discrete:network:b} and \eqref{eqn:delay:network:b} that
\begin{equation*}
	(\eta_{y_i}^{n+1},z_h)_\cHZ - \dd_i(z_h,\eta_{p_i}^{n+1})
	= 0
\end{equation*}
for all $z_h \in Z_h$ and $i = 1,\dots,\m$. 
Equations~\eqref{eqn:discrete:network:c} and \eqref{eqn:delay:network:c} yield 
\begin{align*}
	\tau\, \dd_i(\eta^{n+1}_{y_i},q_h) 
	&= \tau\, \dd_i(\bar y_i^{n+1},q_h) - \tau \,\dd_i(y^{n+1}_{i,h},q_h)\\
	&= -d_i(\tau \dot{\bar u}^{n+1},q_h) - c(\tau \dot{\bar p}_i^{n+1},q_h) + d_i(\tau D_\tau u^{n+1}_h,q_h) + c(\tau D_\tau p^{n+1}_{i,h},q_h)\\
	&\phantom{=}\quad + \tau\, \sum_{j\neq i} \beta_{ij} \big((\bar p_i^{n+1}-\Ri \bar p_i^{n+1} + \eta_{p_i}^{n+1}) - (\bar p_j^{n+1}-\Rj \bar p_j^{n+1} + \eta_{p_j}^{n+1}), q_h \big)_\Q 
\end{align*}
for all $q_h \in Q_h$. 
Defining 
\begin{displaymath}
	\theta^{n+1}_u \vcentcolon= \Ru \bar u^{n+1} - \Ru \bar u^n - \tau \dot{\bar u}^{n+1} \qquad\text{and}\qquad
	\theta^{n+1}_{p_i} \vcentcolon= \Ri \bar p_i^{n+1} -\Ri \bar p_i^n -\tau \dot{\bar p}_i^{n+1},
\end{displaymath}
we get with the particular test functions $v_h = \eta_u^{n+1} - \eta_u^n$, $z_h = \eta^{n+1}_{y_i}$, and $q_h = \eta^{n+1}_{p_i}$, 
\begin{multline*}
	a(\eta^{n+1}_u,\eta^{n+1}_u - \eta^n_u) 
	+ \sum_{i=1}^\m c(\eta^{n+1}_{p_i}-\eta^n_{p_i},\eta^{n+1}_{p_i}) 
	+ \tau\, \sum_{i=1}^\m \|\eta^{n+1}_{y_i}\|^2_\cHZ \\ 
	\quad
	\begin{aligned}
	&= \sum_{i=1}^\m \Big[ d_i(\eta^{n+1}_u - \eta^n_u,\bar p^{n}_i- \Ri \bar p_i^{n} - \tau D_\tau\eta^{n+1}_{p_i}) + d_i(\theta^{n+1}_u,\eta^{n+1}_{p_i}) + c(\theta^{n+1}_{p_i},\eta^{n+1}_{p_i})\Big]\\
	&\phantom{=}\quad+ \tau\,\sum_{i=1}^\m\sum_{j\neq i} \beta_{ij}\Big[(\bar p_i^{n+1}-\Ri \bar p_i^{n+1} + \eta_{p_i}^{n+1},\eta_{p_i}^{n+1})_\Q - (\bar p_j^{n+1}-\Rj \bar p_j^{n+1} + \eta_{p_j}^{n+1},\eta_{p_i}^{n+1})_\Q\Big].
	\end{aligned}
\end{multline*}
Using \Cref{lem:symBilinear} for the bilinear forms $a$ and $c$, we obtain 
\begin{multline*}
	\|\eta^{n+1}_u\|^2_a - \|\eta^n_u\|^2_a + \|\tau D_\tau\eta^{n+1}_u\|^2_a + \sum_{i=1}^\m\Big[\|\eta^{n+1}_{p_i}\|^2_c - \|\eta^n_{p_i}\|^2_c + \|\tau D\eta^{n+1}_{p_i}\|^2_c + 2\tau\, \|\eta^{n+1}_{y_i}\|^2_\cHZ\Big] \\
\begin{aligned}
	&= 2\, \sum_{i=1}^\m \Big[ d_i(\tau D_\tau\eta^{n+1}_u,\bar p_i^{n}- \Ri \bar p_i^{n} - \tau D_\tau\eta^{n+1}_{p_i}) + d_i(\theta^{n+1}_u,\eta^{n+1}_{p_i}) + c(\theta^{n+1}_{p_i},\eta^{n+1}_{p_i}) \Big]\\
	&\phantom{=}\quad+2\tau\, \sum_{i=1}^\m\sum_{j\neq i} \beta_{ij}\Big[(\bar p_i^{n+1}-\Ri \bar p_i^{n+1} + \eta_{p_i}^{n+1},\eta_{p_i}^{n+1})_\Q - (\bar p_j^{n+1}-\Rj \bar p_j^{n+1} + \eta_{p_j}^{n+1},\eta_{p_i}^{n+1})_\Q\Big].
\end{aligned}
\end{multline*}
Rewriting the first term on the right-hand side as for the two-field model and using the Taylor expansion for~$\bar p_i^{n}$, we can estimate
\begin{multline*}
	2\, d_i(\tau D_\tau \eta^{n+1}_u,\bar p_i^{n}- \Ri\bar p_i^{n}-\tau D_\tau\eta^{n+1}_{p_i}) \\
	\begin{aligned}
	&\leq \tfrac{C_{d_i}^2}{c_a\,c_c}\|\tau D_\tau \eta^{n+1}_u\|_a^2 + \|\tau D_\tau\eta^{n+1}_{p_i}\|_c^2 
	+ \tfrac{\tau}{\m} \|\eta_u^n\|_a^2 + \tau\tfrac{\m\, C^2_{d_i}}{c_a}\|\dot{\bar p}_i - \Ri \dot{\bar p}_i\|_{L^\infty(\Q)}^2 \\
	&\phantom{\leq}\quad+ 2\, d_i(\eta_u^{n+1},\bar p_i^{n}-\Ri \bar p_i^{n}) - 2\, d_i(\eta_u^{n},\bar p_i^{n-1}-\Ri \bar p_i^{n-1}).
\end{aligned}
\end{multline*}
Further, we have the two estimates 
\begin{align*}
d_i(\theta^{n+1}_u,\eta^{n+1}_{p_i}) 
\leq \tfrac{C_{d_i}^2}{c_c}\tfrac{1}{\tau}\|\theta^{n+1}_u\|_{\V}^2 + \tfrac{\tau}{4} \|\eta^{n+1}_{p_i}\|^2_{c}, \quad
c(\theta^{n+1}_{p_i},\eta^{n+1}_{p_i}) 
\leq \tfrac{C_c^2}{\tau} \|\theta^{n+1}_{p_i}\|^2_\Q + \tfrac{\tau}{4} \|\eta^{n+1}_{p_i}\|^2_c,
\end{align*}
and the double sum including the exchange rates~$\beta_{ij}$ is bounded from above by 
\begin{align*}
  2\tau\beta\, (\m-1)\, 
  \sum_{i=1}^\m \Big[ \|\bar p_i^{n+1}-\Ri \bar p_i^{n+1}\|_\Q^2
  + \tfrac{3}{c_c}\|\eta_{p_i}^{n+1}\|_c^2 \Big].
\end{align*}
We combine the previous estimates and absorb the terms~$\|\tau D_\tau\eta^{n+1}_{p_i}\|_c^2$ and~$\|\tau D_\tau\eta^{n+1}_u\|^2_a$ using \Cref{ass:weakCouplingNetwork} for the latter. 
Further, we apply~$6\beta (\m-1) \le c_c$ from~\Cref{ass:beta}. 
This yields
\begin{multline*}
	\|\eta^{n+1}_u\|^2_a - (1+\tau) \|\eta^n_u\|^2_a 
	+ \sum_{i=1}^\m\Big[ (1-2\tau) \|\eta^{n+1}_{p_i}\|^2_c - \|\eta^n_{p_i}\|^2_c 
	+ 2\tau\, \| \eta^{n+1}_{y_i}\|^2_\cHZ\Big] \\
	- 2\, \sum_{i=1}^\m d_i(\eta_u^{n+1},\bar p_i^{n}-\Ri \bar p_i^{n}) 
	+ 2\, \sum_{i=1}^\m d_i(\eta_u^{n},\bar p_i^{n-1}-\Ri \bar p_i^{n-1}) \\
	\begin{aligned}
	&\lesssim \sum_{i=1}^m\Big[ \tau\,\m\, \|\dot{\bar p}_i-\Rp \dot{\bar p}_i\|_{L^\infty(\Q)}^2 + 2\tau\beta (\m-1)\|\bar p_i^{n+1}-\Ri \bar p_i^{n+1}\|_\Q^2 + \tfrac{1}{\tau} \|\theta^{n+1}_u\|^2_\V + \tfrac{1}{\tau} \|\theta^{n+1}_{p_i}\|^2_\Q \Big].
	\end{aligned}
\end{multline*}
As in the proof of~\Cref{prop:discErrorTwoField} we can apply~\Cref{ass:projErrorNetwork} to bound~$\theta^{n+1}_u$ and~$\theta^{n+1}_{p_i}$, leading to 
\begin{align*}
\|\theta^{n+1}_u\|_{\V} 
&\lesssim \tau h\, \|\nabla^2\dot{\bar u}\|_{L^\infty(\cHV)} + \tau^2\|\ddot{\bar u}\|_{L^\infty(\V)}, \\
\|\theta^{n+1}_{p_i}\|_\Q 
&\lesssim \tau h\, \|\nabla\dot{\bar y}_i\|_{L^\infty(\cHZ)} 
+ \tau h\, \|\nabla\dot{\bar p}_i\|_{L^\infty(\Q)} 
+ \tau^2 \|\ddot{\bar p}_i\|_{L^\infty(\Q)}.
\end{align*}
Finally, the discrete version of the Grönwall lemma gives 
\[
\|\eta^{n}_u\|^2_\V 
+ \sum_{i=1}^\m \|\eta^{n}_{p_i}\|^2_\Q 
+ \sum_{k=1}^n\, \sum_{i=1}^\m \tau\, \|\eta^{k}_{y_i}\|^2_\cHZ \ 
\lesssim e^{2 t_n} (1+t_{n})\, ( h^2 + \tau^2 )
\]
such that the assertion follows by~\Cref{ass:projErrorNetwork} and the triangle inequality.
\end{proof}
%
%
\subsection{Convergence of the semi-explicit scheme}
The combination of~\Cref{prop:delayNetwork,prop:discErrorNetwork} provides the desired convergence property of the semi-explicit scheme~\eqref{eqn:discrete:network}.   
\begin{theorem}[Convergence, network model]
\label{thm:networkConvergence}
Suppose Assumptions~\ref{ass:weakCouplingNetwork}, \ref{ass:beta}, and~\ref{ass:projErrorNetwork} hold. 
Further, let the right-hand sides~$f\colon [0,T] \to \cHV$ and~$g_i\colon [0,T] \to \Q$ be sufficiently smooth.  
Then, with~$(u, y_i, p_i)$ being the solution of the original system~\eqref{eqn:network} and $u^n_h\in V_h$, $y^n_{i,h}\in Z_h$, $p^n_{i,h}\in Q_h$ the fully discrete approximations obtained by~\eqref{eqn:discrete:network} for $n \le T/\tau$ and initial data~$u_h^0\in V_h$, $p_{i,h}^0\in Q_h$ with
\[
\|\Ru u^0 - u_h^0\|_\V + \sum_{i=1}^\m\|\Ri p_i^0 - p^0_{i,h}\|_\Q\, 
\lesssim\, h
\]
we obtain the error estimate 
\[
\| u(t_n) - u_h^n\|^2_\V 
+ \sum_{i=1}^\m \| p_i(t_n) - p_{i,h}^n\|^2_\Q 
+ \sum_{k=1}^n\, \sum_{i=1}^\m \tau\, \| y_i(t_k) - y_{i,h}^k\|^2_\cHZ 
\lesssim e^{C t_n} (1+t_{n})\, ( h^2 + \tau^2 )
\]
with a constant~$C$ depending on~$\beta$, but independent of~$\tau$ and~$h$. 
\end{theorem}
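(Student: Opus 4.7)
The proof plan is to mirror the strategy used for \Cref{thm:twoFieldConvergence}, namely to use the delay system \eqref{eqn:delay:network} as an intermediate object between the continuous solution $(u, y_i, p_i)$ and the fully discrete semi-explicit approximation $(u_h^n, y_{i,h}^n, p_{i,h}^n)$. The key observation is that, by construction, the implicit Euler discretization in time of the delay system \eqref{eqn:delay:network} coincides exactly with the semi-explicit scheme \eqref{eqn:discrete:network}, because the delayed term $\bar p_i(t-\tau)$ at the grid point $t_{n+1}$ produces precisely $p_{i,h}^n$ in \eqref{eqn:discrete:network:a}. Thus we can split the error via the triangle inequality into a modeling error (continuous solution vs.\ delay solution) and a discretization error (delay solution vs.\ its implicit discretization), bounded by \Cref{prop:delayNetwork} and \Cref{prop:discErrorNetwork}, respectively.

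Concretely, first I would fix history functions $\Phi_i$ satisfying \eqref{eqn:historyNetwork} so that $\bar p_i(0) = p_i^0$ and $\bar u(0) = u^0$. This ensures that the initial data of the delay system coincide with those of the original network problem, so comparing $(u,y_i,p_i)$ with $(\bar u,\bar y_i,\bar p_i)$ is meaningful. Next, I would verify the regularity hypotheses needed by the two propositions: the smoothness of $f$ and $g_i$ together with the smooth history functions $\Phi_i$ yields $\bar p_i \in W^{2,\infty}(\Q)$ via the stability analysis sketched for the neutral delay system in \Cref{sec:neutralDelayPDE}, which carries over to the network case as noted in the remark following \Cref{prop:delayNetwork}. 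The assumed $H^2$-regularity of the bilinear forms $a$, $\dd_i$ then provides $\nabla^2 \bar u \in L^\infty(\cHV)$, $\nabla \bar y_i \in W^{1,\infty}(\cHZ)$, and $\nabla \bar p_i \in W^{1,\infty}(\Q)$, as required in \Cref{prop:discErrorNetwork}.

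Once these regularity properties are secured, the conclusion follows by combining the two estimates. From \Cref{prop:delayNetwork}, the difference between the exact and delay solutions is bounded by a term of order $\tau$ with a constant depending on $\m$ and $\beta$ (through the factor $\mathrm{e}^{C(1+4\m^2\beta^2)t_n}$). From \Cref{prop:discErrorNetwork}, the fully discrete solution approximates the delay solution with order $h+\tau$ up to a factor $\mathrm{e}^{2t_n}(1+t_n)$. Applying the triangle inequality to each of $u(t_n) - u_h^n$, $p_i(t_n) - p_{i,h}^n$, and $y_i(t_k) - y_{i,h}^k$, using the assumption on the initial data to bound $\|\Ru u^0 - u_h^0\|_\V$ and $\|\Ri p_i^0 - p_{i,h}^0\|_\Q$, and absorbing the $\beta$-dependence into the exponential constant $C$ yields the claimed estimate.

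The main technical obstacle is not in assembling the triangle inequality but rather in justifying the regularity of the delay solution, in particular that $\ddot{\bar p}_i$ remains uniformly bounded on $[0,T]$ despite the neutral character of \eqref{eqn:delay:network:a}. Since the proposition estimates involve $\|\ddot{\bar p}_i\|_{L^\infty(\Q)}$, without this bound the whole argument collapses. Fortunately, this is precisely the content deferred to \Cref{sec:neutralDelayPDE}, and the extension from the two-field to the network case only replaces ellipticity of the relevant operator by a G\aa rding-type inequality, which does not affect the qualitative stability statement.
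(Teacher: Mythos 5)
Your proposal follows the paper's proof essentially verbatim: fix the history functions $\Phi_i$ as in \eqref{eqn:historyNetwork}, establish the regularity of the delay solution $(\bar u,\bar y_i,\bar p_i)$ via the neutral-delay stability argument of \Cref{sec:neutralDelayPDE}, and then combine \Cref{prop:delayNetwork} and \Cref{prop:discErrorNetwork} by the triangle inequality. The one detail you gloss over is that \Cref{prop:delayNetwork} controls the flux error only as the continuous-time integral $\sum_{i=1}^\m \Vert \bar y_i - y_i\Vert^2_{L^2(\cHZ)}$, whereas the theorem's flux term is the discrete sum $\sum_{k=1}^n \tau\, \Vert y_i(t_k)-y_{i,h}^k\Vert^2_{\cHZ}$ at the grid points, so the triangle-inequality step additionally requires a quadrature (trapezoidal-rule) argument for $t\mapsto\sum_{i=1}^\m\Vert \bar y_i(t)-y_i(t)\Vert^2_{\cHZ}$, which the paper notes explicitly.
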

\begin{proof}
We define the history functions~$\Phi_i$ as in~\eqref{eqn:historyNetwork}. As in the two-field case, the assumptions on the data and the assumed~$H^2$-regularity imply that the solution of the related delay system~\eqref{eqn:delay:network} stays bounded. 
Thus, following the procedure presented in~\Cref{sec:neutralDelayPDE}, we conclude~$\nabla^2 \bar u \in L^\infty(\cHV)$, $y_i, \bar y_i \in W^{2,\infty}(\cHZ)$, $\nabla {\bar y}_i \in W^{1,\infty}(\cHZ)$, ${\bar p}_i\in W^{2,\infty}(\Q)$, and~$\nabla {\bar p}_i \in W^{1,\infty}(\Q)$. 
With this, all assumptions of~\Cref{prop:delayNetwork,prop:discErrorNetwork} are satisfied and the stated estimate follows from the previous results and the triangle inequality. 
Note that the estimate of the~$y_i$-terms requires an application of the trapezoidal rule for the function~$\sum_{i=1}^\m \Vert \bar y_i(t) - y_i(t)\Vert^2_{\cHZ}$. 
%
\end{proof}
%
\section{Numerical Examples}\label{sec:num}
This section is devoted to the numerical illustration of the convergence results presented in~\Cref{thm:twoFieldConvergence,thm:networkConvergence} and corresponding runtime comparisons. 
Furthermore, we show that the weak coupling condition is sharp and actually a necessary condition for the convergence of the semi-explicit scheme. 

All computations use a FEniCS finite element implementation and have been performed on an HPC Infiniband cluster. 
%
%
\subsection{Linear poroelasticity}\label{sec:num:poro} 
We test the semi-explicit time-integration with the linear poroelasticity example presented in \Cref{exp:poro}. The parameters for the simulation (ommitting the units) are given by
\begin{center}
\begin{tabular}{ccccc}
	$\lambda$ & $\mu$ & $\tfrac{\kappa}{\nu}$ & $\tfrac{1}{M}$ & $\alpha$ \\\toprule
	\num{1.2e10} & \num{6.0e9} & \num{6.33e2} & \num{7.8e3} & \num{0.79}
\end{tabular}.
\end{center}
The simulation is performed in the two-dimensional unit square~$\Omega=(0,1)^2$ with final time $T = 10$. The source terms and the initial condition are chosen as 
\begin{align*}
	f \equiv 0, \qquad 
	g(t) = 10\, \mathrm{e}^t, \qquad \text{and}\qquad 
	p^0(x,y) = 3000\, x(1-x)\, y(1-y).
\end{align*}
For the error analysis, we compute a reference solution with mesh size $h = \num{1.95e-3}$ (with standard $P_1$ finite elements and homogeneous Dirichlet boundary conditions, cf.~\Cref{ex:P1Two}) and time step size~$\tau = \num{4.88e-3}$. The relative errors in the energy norms~$\|\cdot\|_a$ and~$\|\cdot\|_c$ at the final time are depicted in \Cref{fig:poroConvergence}.

\begin{figure}[ht]
	\centering
	\begin{subfigure}[t]{.45\linewidth}
\begin{tikzpicture}

\begin{axis}[
width=2.8in,
height=2.5in,
log basis x={10},
log basis y={10},
tick align=outside,
tick pos=left,
x grid style={white!69.01960784313725!black},
xmin=0.00821187905521205, xmax=0.37162722343835,
xmode=log,
xtick style={color=black},
xlabel = {step size $\tau$},
ylabel = {relative error},
x label style={at={(axis description cs:0.5,-0.05)},anchor=north},
y label style={at={(axis description cs:-0.00,.5)},anchor=south},
y grid style={white!69.01960784313725!black},
ymin=0.000694047534352092, ymax=0.41802792863522,
ymode=log,
ytick style={color=black}
]
\addplot [semithick, color0, mark=asterisk, mark size=3, mark options={solid}]
table {%
0.3125 0.121311205082746
0.15625 0.102696449545496
0.078125 0.0970544552733723
0.0390625 0.0955354687310521
0.01953125 0.0951453139492023
0.009765625 0.0950469941313551
};
\addplot [semithick, color0]
table {%
0.3125 0.121311205082729
0.15625 0.102696449545496
0.078125 0.097054455273375
0.0390625 0.0955354687310525
0.01953125 0.0951453139492014
0.009765625 0.0950469941313546
};
\addplot [semithick, color1, dashed, mark=asterisk, mark size=3, mark options={solid}]
table {%
0.3125 0.0887035841144379
0.15625 0.0610737863269325
0.078125 0.0512738213666369
0.0390625 0.0485000503960298
0.01953125 0.0478172064111215
0.009765625 0.0476684848808265
};
\addplot [semithick, color1, dashed]
table {%
0.3125 0.0887035841145695
0.15625 0.0610737863269237
0.078125 0.0512738213666394
0.0390625 0.0485000503960194
0.01953125 0.0478172064111184
0.009765625 0.0476684848808288
};
\addplot [semithick, color2, mark=asterisk, mark size=3, mark options={solid}]
table {%
0.3125 0.0782396749695912
0.15625 0.0446228089718713
0.078125 0.02994689361992
0.0390625 0.0249817313415543
0.01953125 0.0236751392168991
0.009765625 0.0233977797877669
};
\addplot [semithick, color2]
table {%
0.3125 0.0782396749692228
0.15625 0.0446228089716778
0.078125 0.0299468936199403
0.0390625 0.0249817313413539
0.01953125 0.0236751392168324
0.009765625 0.0233977797877544
};
\addplot [semithick, color3, dashed, mark=asterisk, mark size=3, mark options={solid}]
table {%
0.3125 0.0753644122799604
0.15625 0.0393899273103068
0.078125 0.0214208044770823
0.0390625 0.0136795845731956
0.01953125 0.0111403627727026
0.009765625 0.0105517342229072
};
\addplot [semithick, color3, dashed]
table {%
0.3125 0.0753644122807091
0.15625 0.0393899273074157
0.078125 0.0214208044757615
0.0390625 0.013679584573443
0.01953125 0.0111403627731948
0.009765625 0.0105517342226706
};
\addplot [thick, black, dotted]
table {%
0.3125 0.0625
0.009765625 0.001953125
};
\end{axis}

\end{tikzpicture}
	\end{subfigure}\qquad
	\begin{subfigure}[t]{.45\linewidth}
\begin{tikzpicture}

\begin{axis}[
width=2.8in,
height=2.5in,
legend cell align={left},
legend style={at={(0.03,0.97)}, anchor=north west, draw=white!80.0!black},
log basis x={10},
log basis y={10},
tick align=outside,
tick pos=left,
x grid style={white!69.01960784313725!black},
xmin=0.00821187905521205, xmax=0.37162722343835,
xmode=log,
xtick style={color=black},
xlabel = {step size $\tau$},
x label style={at={(axis description cs:0.5,-0.05)},anchor=north},
y grid style={white!69.01960784313725!black},
ymin=0.000694047534352092, ymax=0.41802792863522,
ymode=log,
ytick style={color=black},
legend style={
	at={(0.5,-0.1)},
	anchor=north,
	/tikz/column 2/.style={
                column sep=10pt,
            },
    /tikz/column 4/.style={
                column sep=10pt,
            },
},
legend columns=3,
legend to name=legPoroConvergence, 
]
\addplot [semithick, color0, mark=asterisk, mark size=3, mark options={solid}]
table {%
0.3125 0.0638420958581949
0.15625 0.0318247749045041
0.078125 0.0150274970880722
0.0390625 0.00659476605964158
0.01953125 0.00290171570855177
0.009765625 0.0023091944455552
};
\addlegendentry{$h=\num{3.12e-2}$}
\addplot [semithick, color0,forget plot]
table {%
0.3125 0.0638420958581754
0.15625 0.0318247749045101
0.078125 0.0150274970880876
0.0390625 0.00659476605964897
0.01953125 0.00290171570854567
0.009765625 0.00230919444555641
};
\addplot [semithick, color1, dashed, mark=asterisk, mark size=3, mark options={solid}]
table {%
0.3125 0.0650923472163485
0.15625 0.032961601891414
0.078125 0.0160392525533389
0.0390625 0.00736748588774769
0.01953125 0.0030023350135265
0.009765625 0.000928420010110678
};
\addlegendentry{$h=\num{1.56e-02}$}
\addplot [semithick, color1, dashed, forget plot]
table {%
0.3125 0.0650923472164636
0.15625 0.0329616018913813
0.078125 0.0160392525533255
0.0390625 0.0073674858876821
0.01953125 0.00300233501347866
0.009765625 0.000928420010146472
};
\addplot [semithick, color2, mark=asterisk, mark size=3, mark options={solid}]
table {%
0.3125 0.0654123659630795
0.15625 0.0332605881737111
0.078125 0.0163237573572178
0.0390625 0.00763597813169851
0.01953125 0.00323798136066624
0.009765625 0.00103044979962129
};
\addlegendentry{$h=\num{7.81e-3}$}
\addplot [semithick, color2, forget plot]
table {%
0.3125 0.065412365962694
0.15625 0.0332605881734673
0.078125 0.0163237573572018
0.0390625 0.00763597813115645
0.01953125 0.0032379813602623
0.009765625 0.00103044979939312
};
\addplot [semithick, color3, dashed, mark=asterisk, mark size=3, mark options={solid}]
table {%
0.3125 0.065492830443623
0.15625 0.0333362267215712
0.078125 0.016396706284635
0.0390625 0.00770708475216959
0.01953125 0.00330688607442831
0.009765625 0.0010930939435466
};
\addlegendentry{$h=\num{3.91e-3}$}
\addplot [semithick, color3, dashed, forget plot]
table {%
0.3125 0.0654928304444942
0.15625 0.0333362267191371
0.078125 0.0163967062835035
0.0390625 0.00770708475270424
0.01953125 0.00330688607589657
0.009765625 0.00109309394202659
};
\addplot [thick, black, dotted]
table {%
0.3125 0.1250
0.009765625 0.00390625
};
\addlegendentry{linear}
\end{axis}

\end{tikzpicture}
	\end{subfigure}\\
	\ref{legPoroConvergence} 
	\caption{Relative error at final time~$T=10$ for the implicit (stars) and the semi-explicit Euler scheme (solid/dashed line) for different mesh sizes~$h$. Left: displacement $u$. Right: pressure $p$.}
	\label{fig:poroConvergence}
\end{figure}
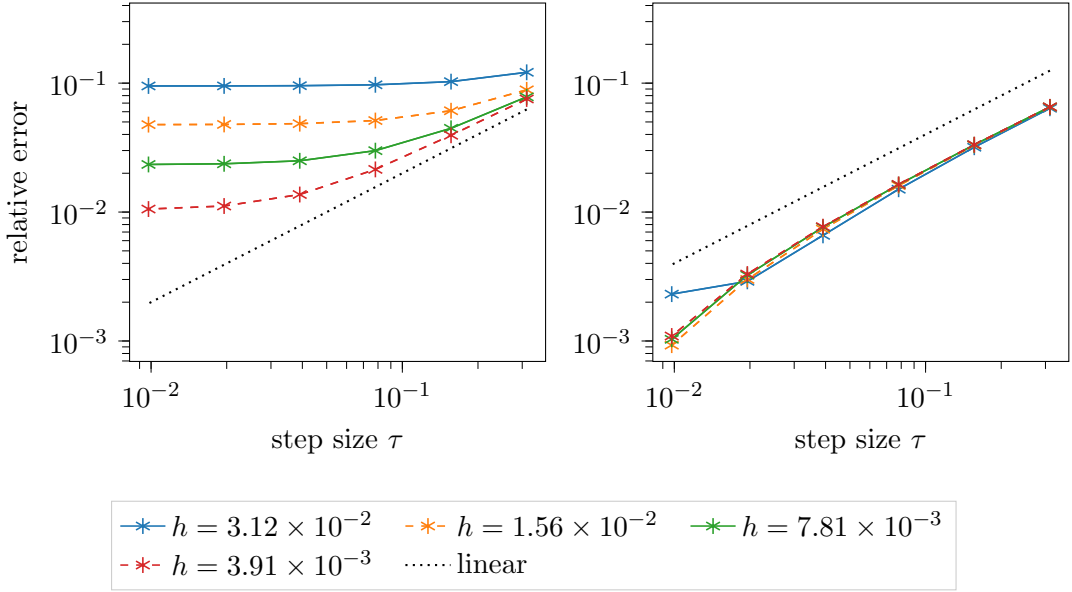

For the pressure variable (right plot in \Cref{fig:poroConvergence}), we observe a linear decay of the error with respect to the time step size in agreement with \Cref{thm:twoFieldConvergence}. The relative error in the displacement (left plot in \Cref{fig:poroConvergence}) is dominated by the spatial discretization error, which also shows the predicted linear decay.  
Thus, the numerical experiment confirms our theoretical findings. It is worth to mention that the semi-explicit Euler performs very similarly as the implicit Euler with a negligible difference.
%
%
\subsection{A network example}
We consider a simple network example with $\m = 4$, i.e., we have four pressure variables. 
The used parameters are mainly motivated by the ones considered in \cite{VarCTHLTV16,JCLT19} and given by 
\begin{center}
\begin{tabular}{cccccccc}
	$\lambda$ & $\mu$ & $\tfrac{\kappa_1}{\nu_1} = \tfrac{\kappa_2}{\nu_2}$ & $\tfrac{\kappa_3}{\nu_3}$ & $\tfrac{\kappa_4}{\nu_5}$ & $\tfrac{1}{M}$ & $\alpha_i$ \\\toprule
	\num{7786.42} & \num{3337.037} & \num{3.75e-4} & \num{1.57e-5} & \num{3.75e-5} & \num{4.50e-2} & \num{0.99}\\
\end{tabular}.
\end{center}
Further, the parameters $\beta_{ij},\, i,j \in \{1,2,3,4\}$ are set to zero, besides 
\begin{equation*}
 \beta_{12} = \beta_{24} = 1.5 \times 10^{-19}, \qquad 
 \beta_{23} = 2 \times 10^{-19}, \qquad
 \beta_{34} = 1 \times 10^{-13}.
\end{equation*}
The simulation is performed in the domain ${\Omega = (0,1)^2 \setminus B_{0.25}((0.5,0.5))}$ with~$T = 10$ and using the $P_1$ finite element space for the displacement, the Raviart-Thomas space of lowest order~$RT_0$ for the fluid fluxes, and the space $P_0$ of piecewise constants for the pressures. The source terms are given by $f \equiv 0$ and $g \equiv 0$ and the initial pressures are chosen as $p^0_2 \equiv p^0_4 \equiv 650$, $p_3^0 \equiv 1000$ and
\begin{align*}
p_1^0(x,y) = \mathbb{1}_{\Omega_p}\big(13300-3238400\,((x-0.75)^2+(y-0.75)^2)\big)
+ \mathbb{1}_{\Omega\setminus\Omega_p}650
\end{align*}
with $\Omega_p = {\{(x,y) \in \Omega\colon (x-0.75)^2+(y-0.75)^2 \leq 1/256\}}$. This means that three initial pressures are constant and $p_1^0$ has a local peak. 

As predicted by~\Cref{thm:networkConvergence}, we have linear convergence in time and space, very similar to the previous example. 
The respective runtimes for the implicit and semi-explicit schemes are given in~\Cref{tab:runtimeNetwork}. The numbers show that a significant percentage of the computation time can be saved when computing with the semi-explicit scheme. 
This is of particular value for small mesh and step sizes $h$ and $\tau$. 
Further note that we did not yet exploit the fact that the decoupled nature of the semi-explicit method facilitates the use of preconditioned iterative methods. 
\newcolumntype{P}{R{1.3cm}}
\begin{table}[ht]
	\caption{Runtime comparison (in seconds)}\label{tab:runtimeNetwork}
	\begin{tabular}{rPPPPP}
		$h = \tau$ & $2^{-4}$ & $2^{-5}$ & $2^{-6}$ & $2^{-7}$ & $2^{-8}$
		\\\toprule
		implicit & $13.2$ & $65.8$ & $471.0$& $4027.2$ & $29921.1$
		\\
		semi-explicit & $9.7$ & $51.6$ & $314.6$& $2544.2$ & $19994.4$
		\\\bottomrule
		\textbf{reduction (in \%)} & $26.5$ & $21.6$ & $33.2$ & $36.8$ & $33.2$
	\end{tabular}
\end{table}
\begin{figure}[htp]
	\centering
	\begin{subfigure}[c]{0.49\textwidth}
		\includegraphics[height=5.8cm]{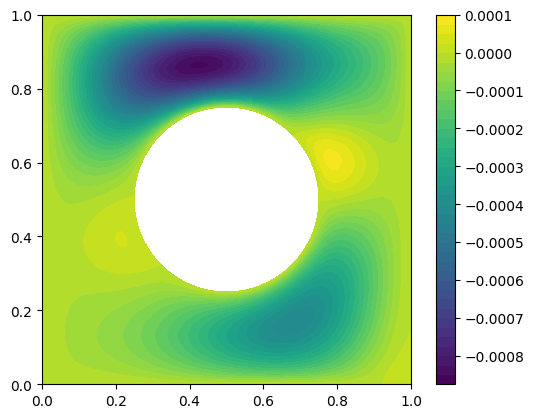}
		\subcaption{displacement $u$ (first component)}
	\end{subfigure}
	\begin{subfigure}[c]{0.49\textwidth}
		\includegraphics[height=5.8cm]{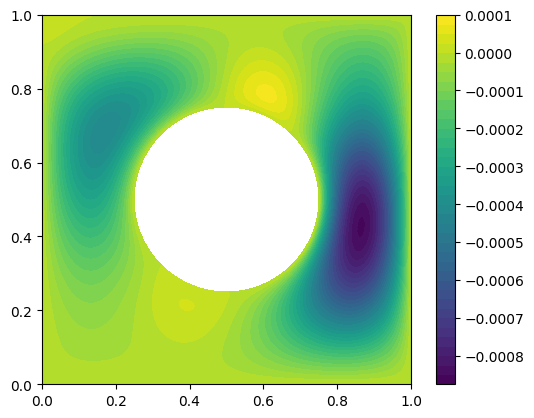}
		\subcaption{displacement $u$ (second component)}
	\end{subfigure}
	\begin{subfigure}[c]{0.49\textwidth}
		\vspace{0.1cm}
		\includegraphics[height=5.8cm]{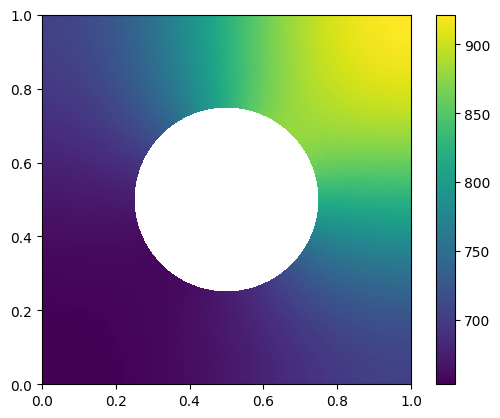}
		\subcaption{pressure $p_1$}
	\end{subfigure}
	\begin{subfigure}[c]{0.49\textwidth}
		\includegraphics[height=5.94cm]{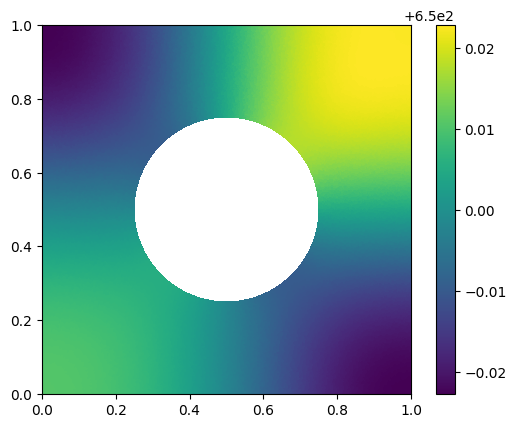}
		\subcaption{pressure $p_2$}
	\end{subfigure}
	\begin{subfigure}[c]{0.49\textwidth}
		\includegraphics[height=5.8cm]{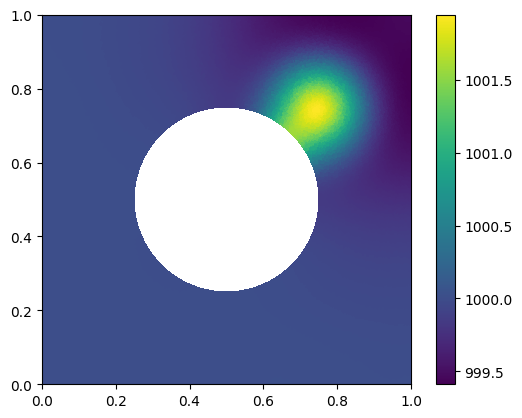}
		\subcaption{pressure $p_3$}
	\end{subfigure}
	\begin{subfigure}[c]{0.49\textwidth}
		\includegraphics[height=5.8cm]{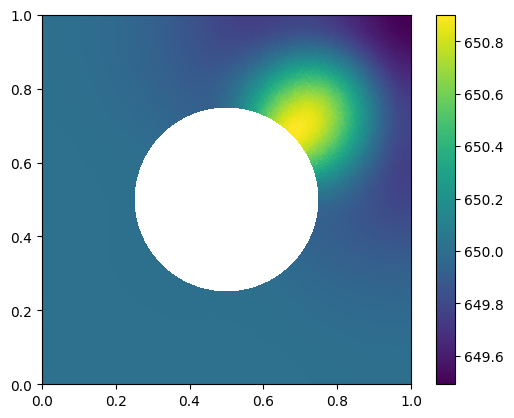}
		\subcaption{pressure $p_4$}
	\end{subfigure}
	\caption{Displacement $u$ and pressures $p_i$, $i \in \{1,2,3,4\}$ at time ${T = 10}$ for the network case, computed with $h = \tau = 2^{-7}$.} 
	\label{fig:plotsNetwork}
\end{figure}

The displacement and the pressures at final time $T = 10$ for the choice ${h=\tau = 2^{-7}}$ are shown in~\Cref{fig:plotsNetwork}.  
One can observe that the high pressure peak in the pressure variable $p_1$ in the initial condition starts to average out across the whole domain and also has an influence on the other pressure variables, especially on~$p_3$ and~$p_4$ where small local pressure increases can be observed. The effect on the pressure $p_2$ is only minor but not as local as for the pressures $p_3$ and $p_4$. Moreover, the changes in the pressures also lead to a deformation of the object originating from the location of the original pressure peak.
%
%
\subsection{Sharpness of the weak coupling condition}
\label{sec:num:weakCoupling}
We conclude our numerical examples with an investigation of the weak coupling condition in \Cref{ass:weakCouplingTwo}. To this end, we consider a toy problem of the form \eqref{eqn:two} with $\V = \cHV = \R^3$, $\Q = \cHQ = \R^1$ and bilinear forms
\begin{align*}
	a(u,v) = v^TAu,\qquad d(v,p) = \omega\, p^TDv, \qquad c(p,q) = q^TCp, \qquad b(p,q) = q^TBp
\end{align*} 
with matrices 
\begin{align*}
	A \vcentcolon= \begin{smallbmatrix}
		\ 2 & -1 & \ 0\\
		-1 &\ 2 & -1\\
		\ 0 & -1 &\ 2
	\end{smallbmatrix},\qquad 
	D \vcentcolon= \big[\, 1\ \ 2\ \ 3\, \big], \qquad 
	C \vcentcolon= 1,\qquad \text{and}\qquad 
	B\vcentcolon= 1.
\end{align*}
The constants $c_a$ and $c_c$ are given by the smallest eigenvalue of $A$ and $C$, respectively, i.e., $c_a \approx 0.586$ and $c_c = 1$. The continuity constant $C_d(\omega)$ is given by the spectral norm of~$\omega\, D$. In particular, we have $C_d(\omega) = |\omega|\|D\|_2 \approx 3.742\,|\omega|$, i.e., the weak coupling condition from \Cref{ass:weakCouplingTwo} requires $\omega \in [-0.2046,0.2046]$. Moreover, in view of \Cref{ex:spectralRadius}, the necessary and sufficient condition for the asymptotic stability of the related delay equation requires $\omega \in (-0.2182,0.2182)$. We test our semi-explicit scheme with different step sizes~$\tau$ and compute the relative error at the final time $T = 1$. For the forcing functions, we choose
\begin{displaymath}
	f^T \equiv \big[\, 1\ \ 1\ \ 1\, \big]\qquad\text{and}\qquad 
	g(t) = \sin(t).
\end{displaymath}
The results are presented in \Cref{fig:weakCoupling}, where the two critical values are represented with dashed lines.
\begin{figure}[ht]
	\centering
	\input{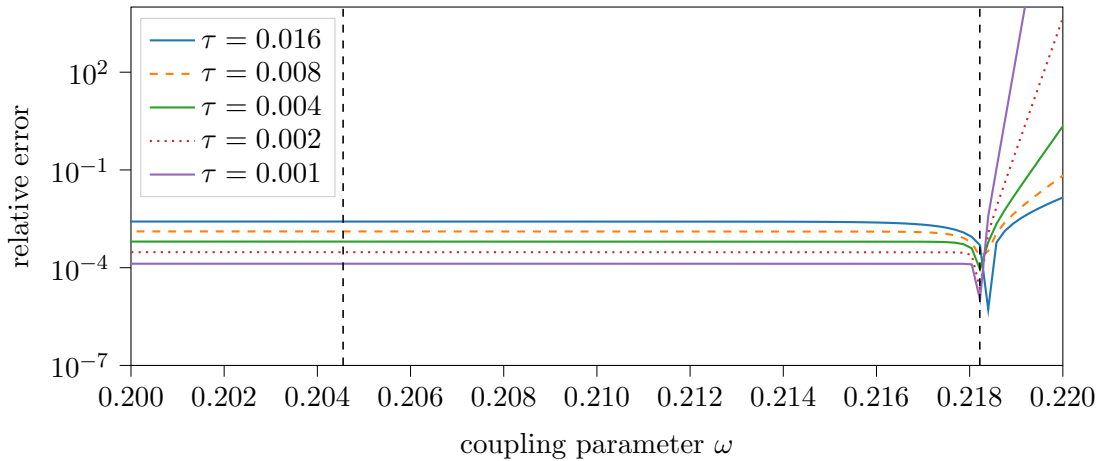}
	\caption{Relative error at the final time point~$T=1$ for different coupling parameters $\omega$ and different time step sizes $\tau$.}
	\label{fig:weakCoupling}
\end{figure}
As expected from \Cref{thm:twoFieldConvergence}, the semi-explicit Euler schemes approximates the true solution well for all $\omega$ that satisfy the weak coupling condition. Independent of the step size $\tau$ we observe that the semi-explicit schemes fail when the related delay equation \eqref{eqn:delay:two} becomes asymptotically unstable.
%
%
\section{Conclusions}
Within this paper, we have proven first-order convergence in time and space of the combination of the semi-explicit Euler scheme with a conforming finite element discretization. This result enables a more efficient time-stepping scheme as the linear system, which needs to be solved in every time step, decouples. 

For the convergence analysis, we have employed a new technique which links the semi-explicit discretization to an implicit discretization of a related delay system. This approach also generates theoretical insight and explains the required weak coupling condition. Convergence of the method is proven for the classical two-field formulation (including poroelasticity) as well as for multiple-network systems which are of high interest in medical applications. 
The theoretical results are illustrated by three numerical experiments. 
%
%
\section*{Acknowledgements} 
R.~Maier gratefully acknowledges support by the German Research Foundation (DFG) in the Priority Program 1748 \emph{Reliable simulation techniques in solid mechanics} (PE2143/2-2). 
The work of B.~Unger is supported by the German Research Foundation (DFG) Collaborative Research Center 910 \emph{Control of self-organizing nonlinear systems: Theoretical methods and concepts of application}, project number 163436311.

Major parts of the paper were evolved at CIRM in Luminy within a {\em Recherches en Binôme} (Research in Pairs) stay in August 2019. We are grateful for the invitation and kind hospitality. 
Further, we thank C.~Carstensen (HU Berlin) for bringing up the idea of a semi-explicit discretization at CMAM-8 in Minsk. 
%
%
\newcommand{\etalchar}[1]{$^{#1}$}

%
%
\appendix 

\section{Stability of the delay equation for the two-field model}
\label{sec:neutralDelayPDE}
In order to prove \Cref{prop:delayTwoField}, we need $\bar p\in W^{2,\infty}(\cHQ)$, where $(\bar{u},\bar{p})$ is the solution of the delay PDAE~\eqref{eqn:delay:two}. 
In the following we assume smooth data and show that this guarantees the existence of a solution and a uniform bounded in~$W^{2,\infty}(\cHQ)$. 
We expect, however, that this result can be proven with weaker assumptions on the data, for instance, by extending the technique of~\cite{BelGZ99} to the PDE setting. Since the main focus of this article is the convergence result for the semi-explicit scheme, we consider this future research.

For the stability analysis we consider a prototypical system of the form 
\begin{subequations}
	\label{eqn:neutralDelayPDE}
	\begin{align}
		\label{eqn:neutralDelayPDE:eq}
  		\langle \dot{\bar{p}}(t) + \calK \bar{p}(t),q \rangle 
  		&= \langle \omega\, \dot{\bar{p}}(t-\tau) + \gt(t), q\rangle, &t\in(0,T],\\
  		\label{eqn:neutralDelayPDE:ic}
  		\bar{p}(t) 
  		&= \Phi(t), &t\in[-\tau,0]
	\end{align}
\end{subequations}
for all test functions~$q\in \Q$ and with smooth history function~$\Phi\colon[-\tau,0]\to \cHQ$, source term~$\gt\colon [0,T] \to \Q^*$, a constant~$\omega\in \R$, and a linear, bounded operator~$\calK\colon \Q\to\Q^*$ that satisfies a G\aa rding inequality, i.e.,
 \begin{equation}
	\label{eqn:Gaarding}
	\langle \calK q,q\rangle \geq \Kalpha \|q\|_\Q^2 - \Kbeta\|q\|_{\cHQ}^2
\end{equation}
for real constants $\Kalpha>0$, $\Kbeta\geq 0$ and all $q\in \Q$. 

Thus, we first need to reduce the delay PDAE~\eqref{eqn:delay:two} to such a parabolic problem with neutral delay. With the operators~$\calA$, $\calB$, $\calC$, $\calD$ corresponding to the bilinear forms $a$, $b$, $c$, $d$, respectively, system~\eqref{eqn:delay:two} can be written as 
\[
  \calA \bar u - \calD^* \bar p(\,\cdot-\tau) = f \ \text{ in }\V^*, \qquad
  \calD \dot{\bar u} + \calC \dot{\bar p} + \calB \bar p = g \ \text{ in }\Q^*.
\]
Since~$\calA\colon \V \to \V^*$ is an invertible operator, we can differentiate the first equation in time and insert it into the second equation. 
Using also the invertibility of~$\calC$, we get 
\[
  \dot{\bar p} + \calC^{-1}\calB \bar p 
  = \tilde g - \calC^{-1}\calD \calA^{-1}\calD^* \dot{\bar p}(\,\cdot-\tau)  
\]
with~$\tilde g := \calC^{-1}g - \calC^{-1} \calD \calA^{-1} \dot f$. 
Considering~$d$ as bilinear form~$d\colon \V \times \cHQ \to \R$, we note that the operator~$\calC^{-1} \calD \calA^{-1}\calD^*\colon \cHQ \to \cHQ$ is bounded with constant~$\omega := C_d^2/(c_a c_c)$. For the stability analysis we can thus replace this operator by the constant~$\omega$.

We like to emphasize that the reduction from the PDAE~\eqref{eqn:delay:two} to the parabolic problem~\eqref{eqn:neutralDelayPDE} comes along with the differentiation of the delay term. Although~\eqref{eqn:delay:two} seems to be of retarded type at first sight (only~$\bar p(\,\cdot-\tau)$ appears), the reformulation shows that it is in fact {\em neutral}. This is due to the fact that the delay term appears in the elliptic equation and thus, in terms of DAEs, in the constraint. 

Following \cite{AltZ18}, we call a function $\bar{p}\in C([0,T];\cHQ\!)\cap L^2(\Q)$ with $\dot{\bar{p}}\in L^2(\Q^*)$ a weak solution of \eqref{eqn:neutralDelayPDE}, if $\bar{p}(0^+) = \bar{p}^0\in \cHQ$ and 
\begin{displaymath}
	\widetilde{p}(t) \vcentcolon= \begin{cases}
		\Phi(t), & t\in[-\tau,0),\\
		\bar{p}, & t\in[0,T]
	\end{cases}
\end{displaymath}
satisfies \eqref{eqn:neutralDelayPDE} in the variational sense.

\begin{proposition}
\label{prop:neutralDelayPDEexistence}
Consider the initial trajectory problem \eqref{eqn:neutralDelayPDE} with $\gt\in L^2(\Q^*)$ and history function~$\Phi\in C^1([-\tau,0];\cHQ\!)$. 
If the bilinear from associated with $\calK$ satisfies a G\aa{}rding inequality~\eqref{eqn:Gaarding}, then \eqref{eqn:neutralDelayPDE} possesses a unique weak solution.
\end{proposition}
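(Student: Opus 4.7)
The plan is to employ the classical \emph{method of steps}: on each interval $I_n := [n\tau,(n+1)\tau]$ the delay term reduces to a known forcing, so \eqref{eqn:neutralDelayPDE} becomes a standard (non-delay) linear parabolic problem, and the solutions can then be concatenated. To deal with the G\aa{}rding term, I would first perform an exponential rescaling $v(t) := \mathrm{e}^{-\Kbeta t}\bar p(t)$, which transforms \eqref{eqn:neutralDelayPDE:eq} into
\[
  \langle \dot v(t) + (\calK + \Kbeta\, \mathrm{id})v(t),q\rangle
  = \mathrm{e}^{-\Kbeta t}\langle \omega\,\dot{\bar p}(t-\tau) + \gt(t), q\rangle,
\]
where the shifted operator $\calK + \Kbeta\,\mathrm{id}\colon \Q \to \Q^*$ is now $\Kalpha$-coercive by \eqref{eqn:Gaarding}. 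This reduces the analysis to the coercive case and avoids tracking the G\aa{}rding constant in each step.

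On the first interval $I_0$ the delay term equals $\omega\,\dot\Phi(t-\tau)$, which, thanks to $\Phi\in C^1([-\tau,0];\cHQ\!)$, lies in $C(I_0;\cHQ\!)\hookrightarrow L^2(I_0;\Q^*)$, while $\gt\in L^2(\Q^*)$ by assumption. Hence the right-hand side is in $L^2(I_0;\Q^*)$ and the standard existence theory for coercive linear parabolic equations with $\cHQ$-initial data (as in \cite[Ch.~23]{Zei90a} via Galerkin approximation and Lions' projection lemma) delivers a unique $v\in L^2(I_0;\Q)\cap C(I_0;\cHQ\!)$ with $\dot v\in L^2(I_0;\Q^*)$ satisfying $v(0)=\Phi(0)$. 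Undoing the rescaling yields $\bar p$ with the same regularity on $I_0$, and in particular $\dot{\bar p}\in L^2(I_0;\Q^*)$. The induction step is now immediate: if $\bar p$ has already been constructed on $[0,n\tau]$ with $\dot{\bar p}\in L^2(0,n\tau;\Q^*)$, then on $I_n$ the delay term $\omega\,\dot{\bar p}(\cdot-\tau)$ is a known element of $L^2(I_n;\Q^*)$, so the same existence result applies with initial value $\bar p(n\tau)\in\cHQ$ (well-defined by the embedding into $C(I_{n-1};\cHQ\!)$). Concatenating the pieces over $n=0,\dots,\lceil T/\tau\rceil-1$ yields a weak solution on $[0,T]$, and uniqueness on each $I_n$ gives global uniqueness.

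The part I expect to need most care is the \emph{compatibility across the junction points} $t=n\tau$. Because the equation is of neutral type, the distributional derivative $\dot{\bar p}$ need not match $\dot\Phi$ at $t=0$, so a jump in $\dot{\bar p}$ may appear at each $t=n\tau$. This is consistent with the weak formulation, where \eqref{eqn:neutralDelayPDE:eq} is only required to hold as an identity in $L^2(\Q^*)$, so jumps on a null set are harmless; however one must verify that the glued function $\bar p$ genuinely lies in $C([0,T];\cHQ\!)$ with $\dot{\bar p}\in L^2(0,T;\Q^*)$ and satisfies the variational equation globally, rather than merely piecewise. This is handled by the continuity $\bar p\in C(I_n;\cHQ\!)$ provided by each step combined with the matching of the initial value at $n\tau$, and by recognising that the piecewise $L^2$ time derivatives assemble to a global $L^2(0,T;\Q^*)$ function (with possible jumps absorbed into the admissible null set). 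Once this is observed, no new regularity issues arise and the existence-uniqueness assertion follows.
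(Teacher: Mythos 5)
Your proposal is correct and follows essentially the same route as the paper: Bellman's method of steps, reducing \eqref{eqn:neutralDelayPDE} on each interval of length $\tau$ to a non-delay linear parabolic problem with known $L^2(\Q^*)$ forcing, solved by standard theory (the paper cites the Lions--Tartar theorem, which handles the G\aa{}rding case directly, so your exponential rescaling is a harmless cosmetic detour). Your extra care at the junction points $t=n\tau$ is a welcome remark but does not change the argument.
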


\begin{proof}
On the interval $I_1 \vcentcolon= [0,\tau]$ we have to solve the initial value problem
\begin{align*}
	\mosdp{1}(t) + \calK \mosp{1}(t) 
	&= \gt(t) + \omega\, \dot{\Phi}(t-\tau),\\
	\mosp{1}(0) &= \Phi(0)\in \cHQ.
\end{align*}
The assumptions imply $\gt + \omega\,\dot{\Phi}(\cdot-\tau)\in L^2(0,\tau,\Q^*)$ and thus the theorem of Lions-Tartar \cite[Lem.~19.1]{Tar06} guarantees a unique solution $\mosp{1} \in C([0,\tau];\cHQ)\cap L^2(0,\tau;\Q)$ with $\mosdp{1}\in L^2(0,\tau;\Q^*)$. In particular, we conclude $\mosp{1}(\tau) \in \cHQ$. Applying Bellmann's method of steps (cf.~\cite[Ch.\,3.4]{BelZ03}), we inductively infer that the sequence
\begin{align*}
	\mosdp{n} + \calK \mosp{n} 
	&= \mos{\gt}{n} + \omega\, \mosdp{n-1},\\
	\mosp{n}(0) &= \mosp{n-1}(\tau)
\end{align*}
with $\mosp{0}(t) \vcentcolon= \Phi(t-\tau)$ and $\mos{\gt}{n}(t) \vcentcolon= \gt(t+(n-1)\tau)$ for $t\in[0,\tau]$ possess unique solutions $\mosp{n}\in C([0,\tau];\cHQ)\cap L^2(0,\tau;\Q)$ with $\mosdp{n}\in L^2(0,\tau;\Q^*)$. The result follows by defining $\bar{p}(t) \vcentcolon= \mosp{n}(t+(n-1)\tau)$ for $t\in[(n-1)\tau,n\tau]$.
\end{proof}
\begin{coroll}
\label{lem:smoothSolutionNeutralPDE}
Consider the initial trajectory problem \eqref{eqn:neutralDelayPDE} with $\gt\in C^1([0,T];\cHQ\!)$ and suppose that the history function $\Phi\in C^2([-\tau,0];\cHQ\!)$ satisfies~$\calK\Phi(0)\in\cHQ$ and the splicing condition
\begin{equation}
	\label{eqn:splicingCondition}
	\dot{\Phi}(0) + \calK \Phi(0) = \gt(0) + \omega\, \dot{\Phi}(-\tau).		
\end{equation}
Further, let the bilinear form associated with the operator $\calK$ satisfy a G\aa{}rding inequality~\eqref{eqn:Gaarding}. Then the weak solution $\bar{p}$ of \eqref{eqn:neutralDelayPDE} satisfies $\bar{p}\in C^1([0,T];\cHQ\!)\cap H^1(\Q)$ with $\ddot{\bar{p}}\in L^2(\Q^*)$.
\end{coroll}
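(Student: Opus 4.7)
The plan is to apply the same method of steps as in \Cref{prop:neutralDelayPDEexistence}, but with Lions--Tartar invoked on the formally differentiated equation for $v := \dot{\bar p}$ on each interval. The splicing condition~\eqref{eqn:splicingCondition} provides the consistent initial value at $t = 0$, and its propagation to subsequent breakpoints is forced by the PDE itself together with $C^1$-regularity obtained inductively.

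On the first interval $I_1 = [0,\tau]$, differentiating the equation for $\mosp{1}$ formally in time and setting $v_1 := \mosdp{1}$, one expects $v_1$ to satisfy
\begin{displaymath}
\dot v_1 + \calK v_1 = \dot{\gt} + \omega \ddot\Phi(\,\cdot-\tau), \qquad v_1(0) = \dot\Phi(0),
\end{displaymath}
where the initial value is obtained by evaluating the original equation at $t = 0^+$ (using $\bar p(0) = \Phi(0)$ and $\calK\Phi(0) \in \cHQ$) together with the splicing condition. By hypothesis the right-hand side lies in $L^2(0,\tau;\cHQ) \subset L^2(0,\tau;\Q^*)$ and $v_1(0) \in \cHQ$, so Lions--Tartar (as used in the proof of \Cref{prop:neutralDelayPDEexistence}) yields a unique solution $v_1 \in C([0,\tau];\cHQ)\cap L^2(0,\tau;\Q)$ with $\dot v_1 \in L^2(0,\tau;\Q^*)$. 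To identify $v_1$ with $\mosdp{1}$, I would set $w_1(t) := \Phi(0) + \int_0^t v_1(s)\,\text{d}s$, integrate the equation for $v_1$ and use the splicing identity to show that $w_1$ is a weak solution of the original PDE on $I_1$. Uniqueness in \Cref{prop:neutralDelayPDEexistence} then gives $w_1 = \mosp{1}$, hence $\mosp{1} \in C^1([0,\tau];\cHQ)$ with $\dot{\mosp{1}} \in L^2(0,\tau;\Q)$ and $\ddot{\mosp{1}} \in L^2(0,\tau;\Q^*)$.

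The induction step on $I_n$ is parallel: the source is $\mos{\gt}{n} + \omega \mosdp{n-1}$ whose $H^1$-in-time regularity is exactly what the inductive hypothesis delivers ($\mosdp{n-1} \in C([0,\tau];\cHQ)$ with $\ddot{\mosp{n-1}} \in L^2(0,\tau;\Q^*)$), the initial value $\mosdp{n-1}(\tau) \in \cHQ$ is available, and the analogue of the splicing condition at $t = (n-1)\tau$ must be checked. Evaluating the PDE from the left and right of $t = k\tau$ one finds that the jump satisfies $[\dot{\bar p}]_{k\tau} = \omega\,[\dot{\bar p}]_{(k-1)\tau}$; the hypothesis~\eqref{eqn:splicingCondition} says $[\dot{\bar p}]_{0} = 0$, so the jumps vanish at all breakpoints by induction. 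Lions--Tartar then provides $v_n$ with the same regularity as $v_1$, and concatenation across the intervals yields $\bar p \in C^1([0,T];\cHQ)\cap H^1(\Q)$ with $\ddot{\bar p} \in L^2(\Q^*)$.

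The main obstacle is the propagation of the splicing condition across the interval boundaries $t = k\tau$: this is where the neutral-type structure manifests itself, since a mismatch at $t = 0$ would be amplified by a factor of $\omega$ at each breakpoint rather than being damped by the parabolic smoothing (exactly the reason why $|\omega| < 1$ appears as a necessary condition in the finite-dimensional case \cite{GuKC03}). A secondary technical point is identifying the Lions--Tartar solution of the differentiated equation with the distributional derivative of $\mosp{n}$, which is handled by the antiderivative construction above combined with the uniqueness statement of \Cref{prop:neutralDelayPDEexistence}.
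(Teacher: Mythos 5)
Your proposal is correct and follows essentially the same route as the paper: differentiate the equation formally, solve the resulting neutral delay problem for $\dot{\bar p}$ via Lions--Tartar and the method of steps, and use the splicing condition~\eqref{eqn:splicingCondition} to identify this solution with the derivative of $\bar p$. The only difference is presentational: the paper applies \Cref{prop:neutralDelayPDEexistence} wholesale to the differentiated initial trajectory problem and checks matching only at $t=0$, whereas you unroll the method of steps and track the jump relation $[\dot{\bar p}]_{k\tau}=\omega\,[\dot{\bar p}]_{(k-1)\tau}$ explicitly, which makes the identification step (left terse in the paper) more transparent.
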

\begin{proof}
We differentiate equation \eqref{eqn:neutralDelayPDE:eq}, which (formally) yields
\begin{subequations}
\label{eqn:neutralDelayPDE:derivative}
\begin{align}
	\dot{\bar{q}}(t) + \calK \bar{q}(t) 
	&= \dot{\gt}(t) + \omega\, \dot{\bar{q}}(t-\tau), & t&>0,\\
	\bar{q}(t) &= \dot{\Phi}(t), & t&\in[-\tau,0].
\end{align}
\end{subequations}
The assumptions imply $\dot{\gt}\in C([0,T];\cHQ)$ and $\dot \Phi\in C^1([-\tau,0];\cHQ)$ and thus~\Cref{prop:neutralDelayPDEexistence} establishes the existence of a weak solution $\bar{q}$ of \eqref{eqn:neutralDelayPDE:derivative}. From the splicing condition \eqref{eqn:splicingCondition} and the continuity of the weak solution $\bar{p}$ of \eqref{eqn:neutralDelayPDE} we obtain
\begin{align*}
	\dot{\bar{p}}(0^+) 
	&= -\calK \bar{p}(0) + \gt(0) + \omega\, \dot{\Phi}(-\tau) 
	= -\calK \Phi(0) + \gt(0) + \omega\, \dot{\Phi}(-\tau) 
	= \dot{\Phi}(0) 
	= \bar{q}(0),
\end{align*}
which establishes $\dot{\bar{p}} = \bar{q}$.
\end{proof}
\begin{remark}
If the data is sufficiently smooth and the splicing condition \eqref{eqn:splicingCondition} is also satisfied for derivatives of the history function, i.e.,
\begin{displaymath}
	\Phi^{(\ell+1)}(0) + \calK \Phi^{(\ell)}(0) = \gt^{(\ell)}(0) + \omega\, \Phi^{(\ell+1)}(-\tau)		
\end{displaymath}
for $\ell\in\mathbb{N}$, then by repeating the arguments in the proof of \Cref{lem:smoothSolutionNeutralPDE} for derivatives of $\bar{p}$, we obtain a smooth solution of the initial trajectory problem \eqref{eqn:neutralDelayPDE}.
\end{remark}
\begin{proposition}
\label{prop:neutralDelayPDEStability}
Let $(\bar{u},\bar{p})$ denote a smooth solution of the initial trajectory problem~\eqref{eqn:neutralDelayPDE} with sufficiently smooth data. Moreover, assume that $\calK$ satisfies a G\aa{}rding inequality~\eqref{eqn:Gaarding} and that the derivatives of the history function~$\Phi$ and the right-hand side~$\gt$ are uniformly bounded, i.e., there exist constants $C_\Phi$ and $C_{\gt}$ such that
\begin{displaymath}
	\|\Phi^{(j)}\|_{L^\infty(-\tau,0;\cHQ)}^2 \leq C_\Phi\qquad\text{and}\qquad
	\|\gt^{(j)}\|_{L^\infty(\Q^*)}^2 \leq C_{\gt}.
\end{displaymath}
Then there exists a constant $C$ independent of $\tau$ such that	
\begin{displaymath}
	\|\bar{p}^{(\ell)}\|_{L^\infty(\cHQ)}^2 \leq \left(C_\phi + \tfrac{C_{\gt}T}{\Kalpha}\right)\mathrm{e}^{CT}
\end{displaymath}
for all $\ell\in\mathbb{N}$. In particular, we obtain $\bar{p}\in W^{2,\infty}(\cHQ)$.
\end{proposition}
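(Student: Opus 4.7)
The plan is to prove the bound by induction on the derivative order $\ell$. Formally differentiating \eqref{eqn:neutralDelayPDE:eq} shows that each $q_\ell \vcentcolon= \bar p^{(\ell)}$ satisfies a neutral-delay equation of exactly the same form as \eqref{eqn:neutralDelayPDE}, with source $\gt^{(\ell)}$ and history $\Phi^{(\ell)}$. By iterating \Cref{lem:smoothSolutionNeutralPDE} together with the higher-order splicing conditions analogous to \eqref{eqn:splicingCondition}, which are satisfied by the smooth data assumption, each $q_\ell$ is a well-defined weak solution in the appropriate function spaces. It therefore suffices to establish an $L^\infty(\cHQ\!)$-bound for a generic solution of \eqref{eqn:neutralDelayPDE} in terms of $C_\Phi$, $C_{\gt}$, $T$, and the problem constants; applying this bound to $q_0, q_1, q_2$ then delivers $\bar p \in W^{2,\infty}(\cHQ\!)$, and the same argument gives the $L^\infty(\cHQ\!)$-bound for every~$\ell$.

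For the base case, I would test \eqref{eqn:neutralDelayPDE:eq} with $\bar p(t) \in \Q$ and invoke the G\aa{}rding inequality \eqref{eqn:Gaarding}. Crucially, the neutral delay term is evaluated via the $\Q^*$-$\Q$ duality pairing, producing
\begin{equation*}
\tfrac{1}{2}\tddt \|\bar p\|_\cHQ^2 + \Kalpha \|\bar p\|_\Q^2 \leq \Kbeta \|\bar p\|_\cHQ^2 + |\omega|\,\|\dot{\bar p}(t-\tau)\|_{\Q^*}\,\|\bar p(t)\|_\Q + \|\gt(t)\|_{\Q^*}\,\|\bar p(t)\|_\Q.
\end{equation*}
The neutral term is then controlled by the equation itself at time $s = t - \tau$: using continuity of $\calK\colon \Q \to \Q^*$, one obtains $\|\dot{\bar p}(s)\|_{\Q^*}^2 \lesssim \|\bar p(s)\|_\Q^2 + \omega^2\,\|\dot{\bar p}(s-\tau)\|_{\Q^*}^2 + \|\gt(s)\|_{\Q^*}^2$. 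Iterating this recursion back to the history region $[-\tau, 0]$ expresses $\|\dot{\bar p}(t-\tau)\|_{\Q^*}^2$ as a finite linear combination of $\|\bar p(t-k\tau)\|_\Q^2$ plus data contributions bounded by $C_\Phi$ and $C_{\gt}$. Substituting this back, absorbing the resulting $\|\bar p\|_\Q^2$-contributions into the LHS term $\Kalpha\|\bar p\|_\Q^2$ via Young's inequality, integrating over $[0, t]$, and invoking Gronwall's lemma yields the claimed bound.

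The main difficulty is to ensure that the resulting constant $C$ in the exponent is \emph{independent of $\tau$}. A naive method-of-steps that applies the parabolic energy estimate slab by slab produces a factor $\mathrm{e}^{C\tau}$ per step and hence the benign exponent $\mathrm{e}^{Ct_n}$ after $n = t_n/\tau$ steps; however, the neutral-delay recursion for $\|\dot{\bar p}(\,\cdot\,-\tau)\|_{\Q^*}$ threatens to introduce an additional growth factor proportional to the \emph{number} of steps rather than the \emph{elapsed time}, and would blow up as $\tau \to 0$. The remedy is to use the $\Q^*$-norm for $\dot{\bar p}$—the weakest norm directly controllable by the equation without regularity gain—and to unfold the recursion into a single continuous Gronwall inequality on $[0,t]$ rather than closing it inductively on each slab. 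This delivers a constant $C$ depending on $\omega$, $\Kalpha$, $\Kbeta$, $\|\calK\|_{\Q \to \Q^*}$, and $T$ but not on $\tau$, which is precisely the property required by \Cref{prop:delayTwoField} to make the delay-system approach of \Cref{sec:semiTwoField} legitimate.
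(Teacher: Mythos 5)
Your overall frame (prove an $L^\infty(\cHQ\!)$ bound for a generic solution of \eqref{eqn:neutralDelayPDE} and apply it to $\bar p^{(\ell)}$) matches the paper, but the core energy estimate has a genuine gap: your treatment of the neutral term does not produce a $\tau$-independent constant. When you unwind the recursion
\begin{equation*}
	\|\dot{\bar p}(s)\|_{\Q^*}^2 \lesssim \|\bar p(s)\|_\Q^2 + \omega^2\,\|\dot{\bar p}(s-\tau)\|_{\Q^*}^2 + \|\gt(s)\|_{\Q^*}^2
\end{equation*}
back to the history region, you take $k\approx t/\tau$ steps and each step multiplies the delay contribution by a fixed factor of at least $\omega^2$ (in fact $3\omega^2$ if you square as written, and at best $(1+\eps)\omega^2$ with weighted Young). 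The resulting prefactor $\bigl((1+\eps)\omega^2\bigr)^{t/\tau}$ blows up as $\tau\to 0$ unless $|\omega|<1$, i.e.\ unless you impose a weak-coupling condition that the proposition does not assume. Switching to the $\Q^*$-norm and postponing Gr\"onwall to a single pass over $[0,t]$ does not repair this: the factor picked up per unwinding step is an honest $\omega^2$, not $1+O(\tau)$, so no rearrangement of the Gr\"onwall argument turns the product of $T/\tau$ such factors into $\mathrm{e}^{CT}$. A second, related problem is that after unwinding you propose to absorb terms $\|\bar p(t-j\tau)\|_\Q^2$ into $\Kalpha\|\bar p(t)\|_\Q^2$; these live at different times, so absorption is only possible after integration, and then the total coefficient is a sum over $j$ of $O(T/\tau)$ terms that again requires a geometric decay in $j$, i.e.\ smallness of $\omega$.

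The paper's proof avoids this by trading steps backward in time for derivatives of the data rather than for powers of $\omega$. On each slab $I_n$ it tests with $\mosp{n}$ and bounds the neutral term in the pivot norm, $\omega\|\mosdp{n-1}\|_{\cHQ}\|\mosp{n}\|_{\cHQ}$, so that the slab estimate reads $\|\mosp{n}\|^2 \le (1+\tau)\|\mosp{n-1}\|^2 + \tau\widetilde C\,\|\mosdp{n-1}\|^2 + \dots$ with the (possibly large) constant $\widetilde C$ always multiplied by $\tau$. Iterating this, together with the same estimate for the differentiated equation, yields a binomial sum $\sum_{i=0}^n\binom{n}{i}(1+\tau)^{n-i}(\tau\widetilde C)^i\|\Phi^{(i)}\|^2 \le C_\Phi(1+\tau+\tau\widetilde C)^n \le C_\Phi\,\mathrm{e}^{(1+\widetilde C)T}$, which is $\tau$-independent for \emph{arbitrary} $\omega$. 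This is precisely why the hypothesis demands uniform bounds on \emph{all} derivatives $\Phi^{(j)}$ and $\gt^{(j)}$ --- a requirement your argument never invokes beyond $j=\ell+1$, which is a telltale sign that the essential mechanism is missing. To fix your proof you would either have to add the restriction $|\omega|<1$ (weakening the statement) or adopt the derivative-raising recursion of the paper.
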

\begin{proof}
Using Bellman's method of steps (cf.\ \cite[Ch.\,3.4]{BelZ03}) for \eqref{eqn:neutralDelayPDE}, we consider the sequence of PDEs
\begin{equation}
\label{eqn:neutralDelay:MOS}			
	\langle \mosdp{n},q\rangle + \langle \calK\mosp{n},q\rangle 
	= \langle \omega\, \mosdp{n-1} + \mos{\gt}{n},q\rangle
\end{equation}
for all test functions~$q\in \Q$. Here, $\mosp{n}(t) \vcentcolon=\bar{p}(t+t_{n-1})$ for $t\in[0,\tau]$ denotes the restriction of the solution to the time interval $I_n \vcentcolon= [t_{n-1}, t_{n}]$ with the the convention $\mosp{0}(t) = \Phi(t-\tau)$ for $t\in[0,\tau]$. With the test function $q = \mosp{n}$, the G\aa{}rding inequality~\eqref{eqn:Gaarding}, and the weighted Young's inequality we obtain
\begin{align*}
	\tfrac{1}{2}\tddt \|\mosp{n}\|_{\cHQ}^2 + \Kalpha\|\mosp{n}\|_{\Q}^2 
	&\leq \Kbeta\|\mosp{n}\|_{\cHQ}^2 + \omega\, \|\mosdp{n-1}\|_{\cHQ}\|\mosp{n}\|_{\cHQ} + \|\mos{\gt}{n}\|_{\Q^*}\|\mosp{n}\|_{\Q} \\
	&\leq \Kbeta\|\mosp{n}\|_{\cHQ}^2 + \tfrac{C}{2}\|\mosdp{n-1}\|_{\cHQ}^2 + \tfrac{1}{2\Kalpha}\|\mos{\gt}{n}\|_{\Q^*}^2 + \Kalpha\|\mosp{n}\|_{\Q}^2
\end{align*}
with $C \vcentcolon= \tfrac{\omega^2}{\Kalpha} C_{\Q\hook\cHQ}^{2}$\!. 
Absorbing $\|\mosp{n}\|_{\Q}^2$, integrating over~$[0,t]$, and using Grönwall's inequality yields
\begin{align*}
	\|\mosp{n}(t)\|_{\cHQ}^2 
	\leq \mathrm{e}^{2\tau\Kbeta}\left(\|\mosp{n}(0)\|_{\cHQ}^2 + \int_0^t \left(C\|\mosdp{n-1}(s)\|_{\cHQ}^2 + \tfrac{1}{\Kalpha}\|\mos{\gt}{n}(s)\|_{\Q^*}^2\right)\right).
\end{align*}
The smoothness of $\bar{p}$ implies $\|\mosp{n}(0)\|_{\cHQ}^2 = \|\mosp{n-1}(0)\|_{\cHQ}^2 + \int_0^\tau \tddt \|\mosp{n-1}(s)\|_{\cHQ}^2 \ds$. Using $\tddt \|\mosp{n-1}\|_{\cHQ}^2 \leq \|\mosdp{n-1}\|_{\cHQ}^2 + \|\mosp{n-1}\|_{\cHQ}^2$ and taking the $L^\infty$-norm implies
\begin{multline*}
	\|\mosp{n}\|_{\LinfHQ}^2 \leq \mathrm{e}^{2\tau\Kbeta}(1+\tau)\|\mosp{n-1}\|_{\LinfHQ}^2\\ + \tau\widetilde{C}\|\mosdp{n-1}\|_{\LinfHQ}^2 + \tfrac{\tau\mathrm{e}^{2\tau\Kbeta}}{\Kalpha}\|\mos{\gt}{n}(s)\|_{\LinfQd}^2
\end{multline*}
with $\widetilde{C} = \mathrm{e}^{2\tau\Kbeta}(1+C)$. 
Using the smoothness of $\bar{p}$, we inductively obtain
\begin{multline*}
	\|\mosp{n}\|_{\LinfHQ}^2 
	\leq \mathrm{e}^{2n\tau\Kbeta}\sum_{i=0}^n \binom{n}{i}(1+\tau)^{n-i}(\tau\widetilde{C})^i \|\mos{\bar{p}}{0}^{(i)} \|_{\LinfHQ}^2\\
	+ \tfrac{\tau\mathrm{e}^{2n\tau\Kbeta}}{\Kalpha}\,\sum_{i=0}^{n-1}\sum_{j=0}^i \binom{i}{j} (1+\tau)^{i-j}(\tau \widetilde{C})^j \|\mos{\gt^{(j)}}{n-i}\|_{\LinfQd}^2.
\end{multline*}
From $\|\Phi^{(j)}\|_{L^\infty(-\tau,0;\cHQ)}^2 \leq C_\Phi$ and $\|\gt^{(j)}\|_{L^\infty(\Q^*)}^2 \leq C_{\gt}$ for all $j\in\mathbb{N}$ and $n\tau \leq T$, we deduce
\begin{align*}
	\mathrm{e}^{2n\tau\Kbeta}\sum_{i=0}^n \binom{n}{i}(1+\tau)^{n-i}(\tau\widetilde{C})^i \|\mos{\bar{p}}{0}^{(i)}\|_{\LinfHQ}^2 
	&\leq C_\Phi\, \mathrm{e}^{(1+2\Kbeta + \widetilde{C})T}.
\end{align*}
Similarly, we obtain
\begin{displaymath}
	\tfrac{\tau\mathrm{e}^{2n\tau\Kbeta}}{\Kalpha}\,\sum_{i=0}^{n-1}\sum_{j=0}^i \binom{i}{j} (1+\tau)^{i-j}(\tau \widetilde{C})^j \|\mos{\gt^{(j)}}{n-i}\|_{\LinfQd}^2 
	\leq \tfrac{C_{\gt}\,T}{\Kalpha}\, \mathrm{e}^{(1+2\Kbeta+\widetilde{C})T},
\end{displaymath}
which implies
\begin{displaymath}
	\|\bar{p}\|_{L^\infty(\cHQ)}^2 \leq \left(C_\phi + \tfrac{C_{\gt}T}{\Kalpha}\right)\mathrm{e}^{(1+2\Kbeta + \widetilde{C})T}.
\end{displaymath}
Repeating this procedure with derivatives of equation~\eqref{eqn:neutralDelay:MOS} finishes the proof.
\end{proof}
\end{document}